\documentclass[12pt, reqno]{amsart}
\usepackage{amsmath,amsthm,amsfonts,
amssymb,latexsym,mathrsfs,bm,tikz,booktabs}
\usepackage{graphicx}
\usepackage{ifpdf,epstopdf}
\usepackage{float}
\usepackage{caption}

\usepackage[hypertexnames=false]{hyperref}
\hypersetup{
colorlinks=true,
linkcolor=red,
filecolor=brown,
citecolor=red
}
\setlength{\textwidth}{6.3in}
\setlength{\oddsidemargin}{0pt}
\setlength{\evensidemargin}{0pt}

\newtheorem{theorem}{Theorem}[section]
\newtheorem{lemma}[theorem]{Lemma}
\newtheorem{prop}[theorem]{Proposition}

\theoremstyle{definition}

\theoremstyle{remark}
\newtheorem{remark}[theorem]{Remark}

\numberwithin{equation}{section}

\def\N{\mathbb{N}}

\def\Z{\mathbb{Z}}
\def\S{\mathfrak{S}}

\def\u{\mathbf{u}}

\newcommand{\des}{{\rm des}}
\newcommand{\maj}{{\rm maj}}
\newcommand{\inv}{{\rm inv}}

\newcommand{\cyc}{{\rm cyc}}

\newcommand{\rlmin}{{\rm rlmin}}
\newcommand{\Rlmin}{{\rm Rlmin}}

\def\cpk{{\rm{cpk}}}
\def\cval{{\rm{cval}}}
\def\cda{{\rm{cdrise}}}
\def\cdd{{\rm{cdfall}}}
\def\crise{{\rm{crise}}}
\def\cfall{{\rm{cfall}}}
\def\fix{{\rm{fix}}}
\def\pk{{\rm{pk}}}
\def\val{{\rm{val}}}
\def\da{{\rm{da}}}
\def\dd{{\rm{dd}}}
\def\fp{{\rm{fp}}}
\def\lin{{\rm{lin}}}
\def\ppk{{\rm{ppk}}}
\def\pval{{\rm{pval}}}
\def\pda{{\rm{pda}}}
\def\pdd{{\rm{pdd}}}

\def\Des{\mathop{\rm Des}\nolimits}
\def\Inv{\mathop{\rm Inv}\nolimits}
\newcommand\LD{\bm{LD}}

\newcommand{\bea}{\begin{array}}
\newcommand{\eea}{\end{array}}
\newcommand{\bg}{\begin{gathered}}
\newcommand{\eg}{\end{gathered}}
\newcommand{\be}{\begin{equation}}
\newcommand{\ee}{\end{equation}}
\def\bea#1\eea{\begin{align}#1\end{align}}

\begin{document}

\title[Mahonian-Stirling statistic for partial permutations]
{Mahonian-Stirling statistics for partial permutations}
\date{\today}
\author{Ming-Jian Ding}
\address[Ming-Jian Ding]{School of Mathematic Sciences,
 Dalian University of Technology, Dalian 116024, P. R. China}
\email{ding-mj@hotmail.com}

\author[Jiang Zeng]{Jiang Zeng}
\address[Jiang Zeng]{Univ Lyon, Universit\'e Claude Bernard Lyon 1, CNRS UMR 5208,
 Institut Camille Jordan, 43 blvd. du 11 novembre 1918, F-69622 Villeurbanne cedex, France}
\email{zeng@math.univ-lyon1.fr}



\subjclass[2010]{Primary 05A19; 05A15; Secondary 05A05; 05A30; 30B70.}

\keywords{partial permutations, Laguerre digraphs, generalized Jacobi-Rogers polynomials, major index, inversion, right-to-left minimum}

\dedicatory{}

\begin{abstract}
Recently Cheng et al. (Adv. in Appl. Math. 143 (2023) 102451) generalized the inversion number to partial permutations, which are also known as Laguerre digraphs, and asked for a suitable analogue of MacMahon's major index. 
We provide such a major index, namely,  
the corresponding maj and inv statistics are equidistributed, 
and exhibit a Haglund-Remmel-Wilson type identity.
We then interpret some Jacobi-Rogers polynomials 
in terms of Laguerre digraphs generalizing Deb and Sokal's alternating Laguerre digraph interpretation of some special Jacobi-Rogers polynomials.
\end{abstract}

\maketitle

\section{Introduction}
Let ${\bm{\gamma}}=(\gamma_0, \gamma_1, \ldots)$ and ${\bm{\beta}}=(\beta_1, \beta_2,\ldots)$ be two sequences of combinatorial numbers or polynomials  with $\beta_k\neq 0$ for all $k$.
The {\bf Jacobi-Rogers polynomials} $\mu_n:=\mu_n( \bm{\beta},\bm{\gamma})$
are the Taylor coefficients of the generic J-fraction
\begin{align}\label{J-CF}
   \sum_{n=0}^\infty \mu_n t^n
 = \cfrac{1}{1-\gamma_0 t-\cfrac{\beta_1 t^2}{1-\gamma_1 t-\cfrac{\beta_2t^2}{1-\cdots}}}.
\end{align}
The {\bf generalized Jacobi-Rogers polynomials}
$\mu_{n,k}:=\mu_{n,k}(\bm{\beta},\bm{\gamma})$ are defined by the recurrence 
\be\label{rec+catalan+matrix}
\begin{cases}
\mu_{0,0}=1, \quad \mu_{n,k}=0 \quad (k\notin [0, n]),\\
\mu_{n,k}=\mu_{n-1, k-1}+\gamma_k \mu_{n-1,k}+\beta_{k+1} \mu_{n-1, k+1} \quad  (n\geq k\geq  0).
\end{cases}
\ee
It is folklore~\cite{Fl80, GJ83, Vi83, Ai07, DS23} that $\mu_{n,k}$ is the generating function of the Motzkin paths from $(0,0)$ to $(n,k)$,
in which each rise gets weight 1, each fall from height $i$ gets weight $\beta_i$, 
and each level step at level $i$ gets weight $\gamma_i$.
In particular, the Jacobi-Rogers polynomial $\mu_{n}$ is the enumerative polynomial of Motzkin paths from $(0,0)$ to $(n,0)$.

The infinity lower triangular matrix $J=[\mu_{n,k}]_{n,k\geq 0}$ is known as
\emph{generalized Catalan  matrix}, \emph{Stieltjes tableau}, \emph{Catalan-Stieltjes tableau}, \emph{Jacobi-Rogers matrix}~\cite{Ai07, IZ10, PZ16, DS23}.
The inverse matrix $A=J^{-1}=[a_{n,k}]_{n,k\geq0}$ is also lower triangular with diagonal 1, 
and the row polynomials $p_n(x)=\sum_{k=0}^n a_{n,k} x^k$ satisfy the recurrence
\be\label{three-term-rec}
\begin{cases}
p_{-1}(x)=0, \quad p_{0}(x)=1,\\
p_{n+1}(x)=(x-\gamma_n) p_n(x)-\beta_n p_{n-1}(x)\quad (n\geq 0).
\end{cases}
\ee
By Favard's theorem the polynomial sequence $(p_n(x))_{n\geq0}$ is orthogonal with respect to 
the linear operator $\mathcal{L}: \Z[\bm{\beta},\bm{\gamma}][x]\to \Z[\bm{\beta},\bm{\gamma}]$ defined by $\mathcal{L}(x^n):=\mu_{n}$, and
\begin{equation*}
\mathcal{L}(p_m(x)p_n(x)):=\beta_0\beta_1\cdots \beta_n \delta_{m,n} \quad (\beta_0=1).
\end{equation*}
In this regard, the sequence $(\mu_{n})_{n\geq0}$ is called the \emph{moment sequence} of the orthogonal polynomials $p_n(x)'s$.
Let 
$$
P=(1, p_1(x), \ldots, p_n(x), \ldots)^T \; \text{and}\;X=(1, x, \ldots, x^n, \ldots)^T.
$$
Then, recurrence \eqref{three-term-rec} is equivalent to
\begin{equation}
P=A\, X\Longleftrightarrow X=J\,P,
\end{equation}
that is,
\begin{equation}\label{inverse matrix}
p_n(x)=\sum_{k=0}^n a_{n,k} x^k\Longleftrightarrow x^n=\sum_{k=0}^n \mu_{n,k}\,p_k(x).
\end{equation}

The  above scheme had its genesis   probably in 
   Euler's J-fraction~\cite{SZ22}
   \begin{subequations}
 \begin{equation}
  \sum_{n=0}^{\infty} n!\,x^n
   = \cfrac{1}{1-\gamma_0 t-\cfrac{\beta_1 t^2}{1-\gamma_1 t-\cfrac{\beta_2t^2}{1-\cdots}}}
 \end{equation}
with $\gamma_k=2k+1$ and $\beta_{k+1}=(k+1)^2$ for $k\geq 0$.
The corresponding orthogonal polynomials are the monic (simple) {\bf Laguerre polynomials} $L_n(x)$:
\begin{equation}
L_n(x)=\sum_{k=0}^n (-1)^{n-k}\binom{n}{k}\frac{n!}{k!} x^k
\Longleftrightarrow
x^n= \sum_{k=0}^n \binom{n}{k} \frac{n!}{k!} L_k(x),
 \end{equation}
and the  generalized Jacobi-Rogers polynomials are  positive integers
\begin{equation}
\mu_{n,k}=\binom{n}{k} \frac{n!}{k!}.
\end{equation}
\end{subequations}

Let $[n]:=\{1,2,\ldots,n\}$.
A \emph{\bf Laguerre configuration} on $[n]$ is a pair $\pi=(I,f)$,
where $I$ is a subset of $[n]$ and $f: I\to [n]$ is an injection, and
is called a $k$-Laguerre configuration on $[n]$ if $|I|=n-k$. 
One can represent a Laguerre configuration by one of the following models~\cite{FS82, DS23, CEH23}.
\begin{enumerate}
\item
A {\bf Laguerre word} over the alphabet $[n]\cup \{\lozenge\}$ is a word consisting of distinct letters in $[n]$
and the symbol $\lozenge$.
Each Laguerre configuration corresponds to a string of $n$ symbols $\pi_1\pi_2\cdots \pi_n$,
some of which are distinct numbers in the range from 1 to $n$ and the remaining ones of
which are some special ``hole'' symbols $\lozenge$.
In this formulation, the domain $I$ of the Laguerre configuration consists with 
the positions in the string that do not contain a hole,
and each such position is mapped to the number in that position. Clearly, a $k$-Laguerre configuration on $[n]$ corresponds 
to a Laguerre word on $[n]$ with $k$ symbol.s 
$\lozenge$.
For example, word $\pi=3\,2\, 5\, \lozenge\,1\,8\,6\,\lozenge$ is a 2-Laguerre word over $[8]$.
\item
A {\bf Laguerre digraph} on $[n]$ is a directed graph with vertex set $[n]$ such that
each vertex has 0 or 1 in-degree and 0 or 1 out-degree. It is clear that there are two types of connected components in a Laguerre digraph: cycles and paths.
A  $k$-Laguerre configuration $(I, f)$ 
corresponds to a Laguerre digraph 
on $[n]$ with $k$ paths such that 
there is an arrow $i\to f(i)$ if and only if  $i\in I$.
For the above Laguerrre word $\pi$  the corresponding Laguerre digraph is depicted in Figure~\ref{graph_tran}.
\begin{figure}[ht]
\begin{tikzpicture}[thick]

\draw [->]  (4.65,2.00)--(4.65,1.00);
\draw [->]  (4.65,1.15)--(4.65,-.05);

\coordinate [label=90:$7$] (7) at (4.45,1.60);
\coordinate [label=90:$6$] (6) at (4.45,0.80);
\coordinate [label=90:$8$] (8) at (4.45,-.10);

\foreach \x in {4.65} \foreach \y in {-0.10,1.00,2.05}
\fill (\x,\y)circle (.05);


\draw[->] (7.20,1.00) arc (0:116:0.8);
\fill (7.20,1.00) circle (.05);
\coordinate [label=90:$5$] (5) at (7.40,0.75);

\draw[->] (6.00,1.70) arc (120:236:0.8);
\fill (6.00,1.70) circle (.05);
\coordinate [label=90:$1$] (1) at (6.00,1.70);

\draw[->] (6.00,0.30) arc (240:356:0.8);
\fill (6.00,0.30) circle (.05);
\coordinate [label=90:$3$] (3) at (5.90,-0.20);

\draw[->] (9.00,1.00) arc (0:356:0.4);
\fill (9.00,1.00) circle (.05);
\coordinate [label=90:$2$] (2) at (9.20,0.75);

\fill (10.00,1.00) circle (.05);
\coordinate [label=90:$4$] (4) at (10.20,0.75);
\end{tikzpicture}
\caption{Laguerre digraph for $\pi=3\,2\, 5\, \lozenge\,1\,8\,6\,\lozenge$}\label{graph_tran}
\end{figure}

\item
A {\bf partial permutation} on $[n]$ is a bijection between two subsets of $[n]$. This can be described by a $n\times n$ $(0,1)$-matrix
such that each row and column has at most one 1. Clearly, 
a $k$-Laguerre word $\pi_1\pi_2\cdots \pi_n$ on $[n]$ can be encoded by a (0,1)-matrix $[a_{i,j}]_{i,j=1}^{n}$ with $a_{i,j}=1$
if and only if $j=\pi_i$. Hence a 
$k$-Laguerre word corresponds to a $(0,1)$-matrix with $k$ 0-rows  and $k$ 0-columns.
\end{enumerate}

Let 
$\S_{n}^k$ be the set of all partial permutations on $[n]$ with $k$ 0-rows (and $k$ 0-columns).
It is clear that  the cardinality of $\S_{n}^k$ is $\binom{n}{k} \frac{n!}{k!}$ with $0\leq k\leq n$ and 
$\S_{n}^{0}=\S_n$, i.e., the set of all permutations of $[n]$.
In what follows, we shall mainly use Laguerre digraphs and  words
to describe statistics over Laguerre configurations or partial permutations.

%

\medskip

For $n\in \N$, we define the $(\beta,q)$-analogue of $n$ by
$$
[n]_{\beta, q}=\beta-1+q^0+q+\cdots +q^{n-1}, \quad
[0]_{\beta,q}=0,
$$
and
$[n]_{\beta,q}! := [1]_{\beta, q}[2]_{\beta, q} \cdots[n]_{\beta, q}$ with $[0]_{\beta,q}!=1$.
When $\beta=1$ we write $[n]_q :=[n]_{1, q}$ and $[n]_q! := [n]_{1, q}!$.
The $q$-binomial coefficient is defined  by
\begin{equation*}
   {n\brack k}_{q} := \frac{[n]_q!}{[k]_q![n-k]_q!}\quad\text{for}\quad 0 \leq k \leq n.
\end{equation*}

The following 
 $(\beta,q)$-analogue of Euler's J-fraction expansion \eqref{J-CF} is known 
 \begin{subequations}
 \begin{equation}
 \sum_{n=0}^{\infty} [n]_{\beta, q}!\,x^n=
  \cfrac{1}{1-\gamma_0 t-\cfrac{\beta_1 t^2}{1-\gamma_1 t-\cfrac{\beta_2t^2}{1-\cdots}}} 
 \end{equation}
  with
$
\gamma_k=q^k([k]_q+[k+1]_{\beta, q})$, $\beta_{k+1}=q^{2k+1}[k+1]_q [k+1]_{\beta,q}$ for $ k\geq 0$.
The corresponding orthogonal polynomials $L_n(x;a|q)$ are the normalized monic little $q$-{\bf Laguerre polynomials}
$p_n(x;a|q)$ with $a=\frac{1-q}{(1-q)\beta+q}$, see~\cite{ AGRZ99,DeVi94, KLS10},
namely,
\begin{align}
L_n(x;a|q)&= \frac{(aq/(1-q);q)_n}{a^n}(-1)^n q^{{n\choose 2}}p_n(ax;a/(1-q)|q)\nonumber\\
          &= \sum_{k=0}^n {n\brack k}_q (-1)^{n-k} q^{\binom{n-k}{2}} a^{k-n} \frac{(aq/(1-q);q)_n}{(aq(1-q);q)_k} x^k\nonumber\\
          &= \sum_{k=0}^n {n\brack k}_q (-1)^{n-k} q^{\binom{n-k}{2}}[k+1]_{\beta,q}\cdots [n]_{\beta,q}x^k.
\end{align}
The inversion formula reads
\begin{align}\label{inverse:q-Laguerre}
x^n=\sum_{k=0}^n {n\brack k}_q [k+1]_{\beta,q}\cdots [n]_{\beta,q} L_k(x;a|q).
\end{align}
\end{subequations}

The combinatorics of classical ($q$-)Laguerre  polynomials and their moments  are well studied~\cite{FS82, Vi83, DeVi94, SS96, IZ10, KS15, PZ20,SZ22}.  In two recent papers,   
Cheng et al. \cite{CEH23} and Deb and Sokal~\cite{DS23} 
generalized several well-known permutation statistics
to partial permutations or Laguerre digraphs. 
In particular, Cheng et al. \cite{CEH23} gave a combinatorial interpretation of the $q$-coefficients in \eqref{inverse:q-Laguerre}  
using an inversion-like statistic 
$\bm{\inv}$ on partial permutations
and raised the problem~\cite[Problem 8.1]{CEH23} to find a \emph{suitable} major index on partial permutations, while Deb and Sokal~\cite{DS23} extended cycle-alternating permutations to cycle-alternating Laguerre digraphs and interpreted some Jacobi-Rogers polynomials in terms of Laguerre digraphs.
In this paper we shall provide such a major index $\bm{\maj}$, which is deemed appropriate 
because it is equidistributed with the inversion statistic $\bm{\inv}$, 
see Theorem~\ref{thm+gf+maj+rlmin+partial} and Theorem~\ref{thm+main+inv+maj+ide}. We then  interpret some 
 Jacobi-Rogers polynomials in terms of Laguerre digraphs generalizing  Deb and Sokal's alternating Laguerre digraph interpretation 
of some special Jacobi-Rogers polynomials, see Theorem~\ref{thm+gen+Jac+Rog+Laguerre}.

The rest of this paper is organized as follows, we will present our main results in Section~\ref{sec+main+results}.
The first one is a major index $\maj$ on partial permutations 
such that the bi-statistic $(\maj, \rlmin)$ of 
major index and right-to-left minimum is equidistributed with the bi-statistic $(\inv, \rlmin)$ of inversion number and right-to-left minimum 
on partial permutations, the corresponding generating function is also got, see Theorem \ref{thm+gf+maj+rlmin+partial}.
The second one is to generalize the equidistribution of inversion number and major index over partial permutations to a Haglund-Remmel-Wilson type identity for partial permutations, see Theorem \ref{thm+main+inv+maj+ide}.
The last one is to extend cyclic statistics on permutations to partial permutations and evaluate the generalized Jacobi-Rogers polynomials, see Theorem \ref{thm+gen+Jac+Rog+Laguerre}.
In Section \ref{sec+gf+maj+equi}, we give a proof of Theorem \ref{thm+gf+maj+rlmin+partial} by a bijection,
and prove Theorem \ref{thm+main+inv+maj+ide}, which gives an answer to Problem 8.1 in \cite{CEH23}.
Lastly, a proof of Theorem \ref{thm+gen+Jac+Rog+Laguerre} will be given in Section \ref{sec+eulerian+partial}.

\section{Main results}\label{sec+main+results}

Let $M$ be the multiset $\{1^{m_1},2^{m_2},\ldots,r^{m_r}\}$,
where ${m_i}$ is the number of copies of letter $i$ and $n=m_1+m_2+\cdots+m_r$.
Denote by $\S_M$ the set of all permutations of set $M$.
If all $m_i$ are equal to 1, then $\S_M$ is the symmetric group $\S_n$.
Let $\pi \in \S_M$  be a permutation with $\pi=\pi_1\pi_2\cdots \pi_n$.
Define the set of all inversion indexes ending at position $i$ of $\pi$ by
\begin{equation}\label{lehmer+code}
\Inv^{\Box,i}(\pi):=\{j\,|\,1 \leq j < i\leq n, \pi_j > \pi_i \}
\end{equation}
and denote by $\inv^{\Box,i}(\pi)$ the cardinality of set $\Inv^{\Box,i}(\pi)$,
thus the \emph{inversion number} of $\pi$ is given by
\begin{equation*}
\inv(\pi):=\sum_{i=1}^n\inv^{\Box,i}(\pi).
\end{equation*}
Define the descent set of $\pi$  by
\begin{equation*}
\Des(\pi):=\{ i \in [n-1]~|~ \pi_i > \pi_{i+1} \}
\end{equation*}
and the number of descents of $\pi$ is the cardinality of $\Des(\pi)$, denoted by $\des(\pi)$.
The \emph{major index} \cite{Mac15} of $\pi$ is defined as
\begin{equation*}
  \maj(\pi) := \sum_{i \in \Des(\pi)}i.
\end{equation*}

Let $(A,B)$ be an ordered partition of $[n]$ with $|A|=n-k$.
Define the between-set inversion $\inv(A,B)=|\{(a,b)\in A\times B\,|\, a>b\}|$.
It is known \cite[Lemma 2.6.6 (2)]{GJ83} that
\begin{align}\label{gaussian}
	\sum_{A\uplus B=[n], |A|=n-k}q^{\inv(A,B)}={n\brack k}_q.
\end{align}

In what follows, we shall consider an element $\pi \in \S_{n}^k$ as a permutation
$\pi_1\pi_2\cdots \pi_n$ of the $n$-subset of multiset $\{1, 2, \ldots, n, \lozenge^k\}_<$,
where the letters are ordered as~$1<2<\cdots <n<\lozenge$
and $\pi$ has $k$ ``hole" symbols $\lozenge$.
A \emph{descent} of a partial permutation $\pi \in \S_{n}^k$ is an index $i\in [n-1]$ 
such that $\pi_i > \pi_{i+1}$, and the descent set of $\pi$ is defined by
\begin{equation*}
	\Des(\pi):=\{i \in [n-1]\,|\,\pi_i > \pi_{i+1} \}.
\end{equation*}
Let $\des(\pi)=|\Des(\pi)|$.
For each $\pi =(I,f)\in \S_{n}^k$,
let $S_{\pi}=\{f(i)\,|\,i \in I\}$ and $\overline{S}_{\pi}=[n]\backslash S_{\pi}$.
Define the inversion number of $\pi$ by
\begin{equation}\label{def:inv}
	\inv(\pi)=\inv_0(\pi) +\inv(S_{\pi},\overline{S}_{\pi}),
\end{equation}
where $\inv_0(\pi) :=\sum_{i=1}^n\inv^{\Box,i}(\pi)$.
We define the \emph{major index} of $\pi$ by
\begin{equation}\label{def+maj+partial}
	\maj(\pi):=\maj_0(\pi)+\inv(S_{\pi},\overline{S}_{\pi}),
\end{equation}
where $\maj_0(\pi)=\sum_{i \in \Des(\pi)} i$.

Given $\pi \in \S_n^k$, we say that letter $\pi_i \in [n]$ is a \emph{right-to-left minimum}
if all letters in $[\pi_{i}-1]$ are in front of $\pi_i$ in $\pi$.
Note that $1$ is always a right-to-left minimum.
Denote by $\Rlmin(\pi)$ the set of all right-to-left minima in $\pi$
and $\rlmin(\pi)$ its  cardinality.
We show that the bi-statistic $(\maj,\rlmin)$ is equidistributed with $(\inv,\rlmin)$ 
and their generating function has a product formula.

\begin{theorem}\label{thm+gf+maj+rlmin+partial}
Let $n,k$ be nonnegative integers. Then
\begin{equation}\label{Id+gf+maj+rlmin+partial}
   \sum_{\pi \in \S_{n}^{k}}\beta^{\rlmin(\pi)}q^{\maj(\pi)}
 = \sum_{\pi \in \S_{n}^{k}}\beta^{\rlmin(\pi)}q^{\inv(\pi)}={n\brack k}_q [k+1]_{\beta,q}\cdots [n]_{\beta,q}.
\end{equation}
\end{theorem}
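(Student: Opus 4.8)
The plan is to prove the two equalities in \eqref{Id+gf+maj+rlmin+partial} by separate arguments, the right-hand one by enumeration and the left-hand one by a bijection. The two key structural observations are: (i) by \eqref{def:inv} and \eqref{def+maj+partial}, both $\inv$ and $\maj$ split off the common term $\inv(S_\pi,\overline S_\pi)$; and (ii) if one sets $\ell(S):=\max\{j:[j]\subseteq S\}$ (with $\ell(S)=0$ when $1\notin S$), then directly from the definition $\rlmin(\pi)$ is the number of right-to-left minima of the subword of $\pi$ on the letters $1,2,\dots,\ell(S_\pi)$, so it depends only on $S_\pi$ and on the relative order of $1,\dots,\ell(S_\pi)$ in $\pi$. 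Accordingly I would organize everything around the partition of $\S_n^k$ by the image set $S:=S_\pi$; on the fibre over $S$ one relabels the $n-k$ numeric letters order-isomorphically onto $\{1,\dots,n-k\}$, which fixes $1,\dots,\ell(S)$ and hence preserves $\inv_0$, $\maj_0$ and the above $\rlmin$ count, and identifies every fibre with the set of permutations of the multiset $\{1,\dots,n-k,\lozenge^k\}_<$ ($\lozenge$ largest), the dependence on $S$ being controlled by \eqref{gaussian}.

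For the right-hand equality I would fix a fibre and build the word in two stages. First insert the numeric letters $1,2,\dots,n-k$ in increasing order: placing $v$ in a gap with $r$ letters to its right adds exactly $r$ to $\inv_0$, and $v$ is a right-to-left minimum iff $r=0$; as these choices are independent and (for fixed $\sigma$) $\beta$ is weighted only for $v\le\ell:=\ell(S)$, the numeric letters contribute $\prod_{v=1}^{\ell}[v]_{\beta,q}\prod_{v=\ell+1}^{n-k}[v]_q$. Then insert the $k$ holes into the $n-k+1$ gaps all at once: a hole placed after the $p$-th numeric letter adds $n-k-p$ to $\inv_0$ and does not affect $\rlmin$, so the holes contribute $h_k(1,q,\dots,q^{n-k})={n\brack k}_q$. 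Summing these fibre generating functions against the weights $\sum_{\ell(S)=\ell}q^{\inv(S,\overline S)}=q^{\,n-k-\ell}{n-\ell-1\brack n-k-\ell}_q$ (again from \eqref{gaussian}) reduces the claim to the $q$-binomial identity
\[
\sum_{\ell=0}^{n-k}q^{\,n-k-\ell}{n-\ell-1\brack n-k-\ell}_q\,[\ell]_{\beta,q}!\,\frac{[n-k]_q!}{[\ell]_q!}=\prod_{j=k+1}^{n}[j]_{\beta,q},
\]
which one settles by a routine induction on $n-k$ using the $q$-Pascal recurrence (expanding ${n-\ell-1\brack n-k-\ell}_q$).

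For the left-hand equality the goal is a bijection $\varphi:\S_n^k\to\S_n^k$ with $S_{\varphi(\pi)}=S_\pi$, $\inv(\varphi(\pi))=\maj(\pi)$ and $\rlmin(\varphi(\pi))=\rlmin(\pi)$. Since $\varphi$ fixes the image set, hence the common term $\inv(S_\pi,\overline S_\pi)$, it suffices (after the relabeling above) to produce a single bijection on permutations of $\{1,\dots,n-k,\lozenge^k\}_<$ that sends $\maj_0$ to $\inv_0$, keeps the letter multiplicities fixed (so holes stay holes), and preserves, for every numeric value $v$, the predicate ``all smaller numeric letters precede $v$''; preserving this predicate for all $v$ forces the $\rlmin$ count to be preserved on every fibre simultaneously. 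The natural candidate is a Foata-type second fundamental transformation adapted to words, for which $\maj_0\mapsto\inv_0$ and the fixing of multiplicities are the standard part; the hard part — and the main obstacle of the whole argument — is to verify the predicate-preservation, which I would do by induction on the recursive construction of the transformation, checking that each elementary cyclic shift only permutes blocks lying entirely to one side of the letters being compared and therefore never alters, for any $v$, the position of $v$ relative to $\{1,\dots,v-1\}$. (Alternatively one can bypass the explicit bijection by deriving a matching product formula for the $\maj$-side through the same two-stage construction, replacing the increasing-value insertion by a Carlitz-type insertion that tracks $\maj_0$; but only the bijective route additionally yields that $(\maj,\rlmin)$ and $(\inv,\rlmin)$ are equidistributed on each individual fibre.)
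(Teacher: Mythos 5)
Your overall architecture (fibre over the image set $S_\pi$, split off the common term $\inv(S_\pi,\overline S_\pi)$, reduce to multiset permutations of $\{1,\dots,n-k,\lozenge^k\}$) is sound, and your right-hand computation checks out: the two-stage insertion is correct, the weight $\sum_{\ell(S)=\ell}q^{\inv(S,\overline S)}=q^{n-k-\ell}{n-\ell-1\brack n-k-\ell}_q$ is right, and the resulting $q$-identity is true (I verified it for several $(n,k)$), though you do still owe the induction. The genuine gap is in the left-hand equality, precisely at the step you flag as the crux. Your claimed invariant --- that each cyclic shift in the word version of Foata's transformation ``never alters, for any $v$, the position of $v$ relative to $\{1,\dots,v-1\}$'' --- is false. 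Concretely, compute the image of $4312$: after processing $431$ one appends $x=2$, the last letter $1\le 2$ forces a single block $[431]$ (only the letter $1$ is $\le 2$), and the shift produces $143$; the letter $1$ has moved from after $3$ and $4$ to before them, so the position of $v=3$ (and of $v=4$) relative to its smaller letters has changed. So the induction as you describe it cannot be carried out. The lemma itself is nevertheless true, but the correct argument is subtler: in the case ``split after letters $\le x$'' the shift removes the block's last letter $b\le x$ from the set of smaller letters lying to the right of each $v>x$ in that block, and it is only the subsequent appending of $x<v$ at the far right that restores nonemptiness of that set; in the other case the block's last letter $>x$ moves left past only letters $\le x$, and again the appended $x$ lands after every $v>x$. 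Tracking \emph{emptiness} of the set $\{u<v: u$ to the right of $v\}$ (rather than the set itself) one finds it evolves by the identical rule in $\pi$ and in its image, which is what you actually need. Without this repair, or some substitute, your proof of the first equality is incomplete.

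For comparison, the paper takes a route that makes both of your hard points disappear. For the product formula it fibres over the \emph{positions} $I$ of the holes rather than over the image set, and inserts the values $j_1,\dots,j_{n-k}$ into the empty slots while counting $\inv_{\blacklozenge}+\inv(S_\pi,\overline S_\pi)$ together; each fibre then contributes exactly $\prod_{\ell=1}^{n-k}(\beta+q+\cdots+q^{n-\ell})=[k+1]_{\beta,q}\cdots[n]_{\beta,q}$ independently of $I$, and no auxiliary $q$-binomial identity is needed. For the equidistribution it uses Carlitz's insertion bijection $\psi$ (from $\inv$ to $\maj$) on each fibre $\S_n^k(J)$: there a value is a right-to-left minimum precisely when its code entry $b_i$ vanishes, i.e.\ when it is inserted in the slot labelled $0$, so preservation of $\Rlmin$ is essentially built into the construction rather than being a delicate property to verify after the fact. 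Your approach is workable and has the mild advantage of producing a $\maj\to\inv$ bijection fixing $S_\pi$ on each fibre, but as written the key lemma is unproved and its proposed proof is based on a false invariant.
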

The second identity in \eqref{Id+gf+maj+rlmin+partial} is due to Cheng et al.~\cite{CEH23}.
When $\beta=1$, we have
\begin{equation}\label{eq+gf+maj}
 \sum_{\pi \in \S_{n}^{k}}q^{\inv(\pi)}= \sum_{\pi\in\S_{n}^k}q^{\maj(\pi)}={n\brack k}_{q}\frac{[n]_q!}{[k]_q!},
\end{equation}
which gives an answer to the open problem in \cite[Problem 8.1]{CEH23}.
When $k=0$, Remmel and Wilson \cite{RW15} generalized
the first equality of \eqref{eq+gf+maj}
to  the following identity
\begin{equation}\label{identity+HRW}
     \sum_{\pi \in \S_n}q^{\inv(\pi)}\prod_{i \in \Des(\pi)}\left(1+\frac{z}{q^{1+\inv^{\Box,i}(\pi)}}\right)
   = \sum_{\pi \in \S_n}q^{\maj(\pi)}\prod_{i=1}^{\des(\pi)}\left(1+\frac{z}{q^{i}} \right),
\end{equation}
which had been conjectured by Haglund.
Obviously, the $z=0$ case of \eqref{identity+HRW} is the first equality in \eqref{eq+gf+maj} with $k=0$. 
Actually, MacMahon~\cite{Mac15} (see also Andrews \cite[Theorem 3.6, Theorem 3.7]{And76})
has proved that the statistics inversion number and major index are equidistributed on $\S_M$
and their common generating function has a product formula,
\begin{equation}\label{ide+equi+mac}
\sum_{\pi \in \S_M}q^{\inv(\pi)} = \sum_{\pi \in \S_M}q^{\maj(\pi)}=\frac{[n]_q!}{[m_1]_q![m_2]_q!\cdots[m_r]_q!}.
\end{equation}
In 2016 Wilson \cite{Wi16} extended identity \eqref{identity+HRW} to multiset $M$,
\begin{equation}\label{ide+HRW+multiset}
     \sum_{\pi \in \S_M}q^{\inv(\pi)}\prod_{i \in \Des(\pi)}\left(1+\frac{z}{q^{1+\inv^{\Box,i}(\pi)}}\right)
   = \sum_{\pi \in \S_M}q^{\maj(\pi)}\prod_{i=1}^{\des(\pi)}\left(1+\frac{z}{q^{i}} \right).
\end{equation}
If $m_i=1$ for all $i$, then identity \eqref{ide+HRW+multiset} reduces to \eqref{identity+HRW},
and if $z=0$, then identity \eqref{ide+HRW+multiset} reduces to \eqref{ide+equi+mac}.
Since then, an identity analogous to \eqref{identity+HRW} connecting major-like and inv-like stststics
is called \emph{Haglund-Remmel-Wilson identity}~\cite{Wi16, Liu23Eur, Liu23DM, YLYZ23}.

The following theorem is our second  main result,
which gives an answer to the open problem in \cite[Problem 8.1]{CEH23} when $z=0$.
%


\begin{theorem}\label{thm+main+inv+maj+ide}
For nonnegative integers $n,k$, we have
\begin{equation}\label{main+ide+RHW+partial}
     \sum_{\pi \in \S_{n}^k}q^{\inv(\pi)}\prod_{i \in \Des(\pi)}\left(1+\frac{z}{q^{1+\inv^{\Box,i}(\pi)}}\right)
   = \sum_{\pi \in \S_{n}^k}q^{\maj(\pi)}\prod_{i=1}^{\des(\pi)}\left(1+\frac{z}{q^{i}} \right).
\end{equation}
\end{theorem}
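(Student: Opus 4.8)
The plan is to reduce Theorem~\ref{thm+main+inv+maj+ide} to Wilson's Haglund-Remmel-Wilson identity \eqref{ide+HRW+multiset} for multiset permutations, by exploiting the decomposition of the statistics on $\S_n^k$ into a ``word part'' and a ``between-set'' part that is already visible in the definitions \eqref{def:inv} and \eqref{def+maj+partial}. Recall that a partial permutation $\pi=(I,f)\in\S_n^k$ is encoded as a word $\pi_1\cdots\pi_n$ over $\{1,2,\ldots,n,\lozenge^k\}_<$, and that $\inv(\pi)=\inv_0(\pi)+\inv(S_\pi,\overline S_\pi)$ while $\maj(\pi)=\maj_0(\pi)+\inv(S_\pi,\overline S_\pi)$, where $S_\pi$ is the set of values actually used. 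The key observation is that the descent set $\Des(\pi)$, the descent bottoms, and the quantities $\inv^{\Box,i}(\pi)$ all depend only on the underlying word $\pi_1\cdots\pi_n$, so the extra factor $\inv(S_\pi,\overline S_\pi)$ is a ``spectator'' that contributes the same power of $q$ to both sides once we fix the word.

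First I would group the sum on each side of \eqref{main+ide+RHW+partial} according to the pair $(S,\overline S)$, where $S\subseteq[n]$ with $|S|=n-k$ ranges over all possible value-sets. For a fixed such $S$, the partial permutations with $S_\pi=S$ are in bijection with the permutations of the multiset $M_S:=S\cup\{\lozenge^k\}$ (as a word of length $n$): choose which letters of $S$ go where, and fill the remaining $k$ slots with $\lozenge$'s. Under the ordering $1<\cdots<n<\lozenge$, a descent of $\pi$ is exactly a descent of the corresponding multiset-word, and $\inv_0(\pi)=\inv$ of the word, $\maj_0(\pi)=\maj$ of the word, and $\inv^{\Box,i}(\pi)=\inv^{\Box,i}$ of the word. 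Therefore, pulling out the common factor $q^{\inv(S,\overline S)}$, the $S$-block of the left-hand side equals $q^{\inv(S,\overline S)}$ times the left-hand side of \eqref{ide+HRW+multiset} for $M=M_S$, and similarly for the right-hand side. Wilson's identity \eqref{ide+HRW+multiset} then gives equality block by block, and summing over all $S$ — using \eqref{gaussian} only if one wants to recover the closed form, but not needed for the identity itself — yields \eqref{main+ide+RHW+partial}.

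The one genuine subtlety, and the step I expect to require the most care, is verifying that the word-level statistics of a partial permutation really do coincide with the multiset-permutation statistics of $M_S$ \emph{with the convention that all $\lozenge$'s are equal and larger than every number}. In particular one must check: (i) that $\Des$, $\des$, and the descent positions match (immediate from $\pi_i>\pi_{i+1}$ using the given total preorder, noting two adjacent $\lozenge$'s form no descent, consistent with $\lozenge=\lozenge$ in $\S_{M_S}$); (ii) that $\inv^{\Box,i}(\pi)=\#\{j<i:\pi_j>\pi_i\}$ agrees with the multiset notion — here the only thing to confirm is that when $\pi_i$ is a number, a preceding $\lozenge$ counts as an inversion (it does, since $\lozenge>\pi_i$), and when $\pi_i=\lozenge$ the relevant count over earlier $\lozenge$'s is zero in both settings; and (iii) that $\inv_0(\pi)=\sum_i\inv^{\Box,i}(\pi)$ and $\maj_0(\pi)=\sum_{i\in\Des(\pi)}i$ are literally the definitions of $\inv$ and $\maj$ on $\S_{M_S}$ from Section~\ref{sec+main+results}. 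None of this is deep, but it must be stated precisely because the definition of $\inv(\pi)$ in \eqref{def:inv} separates out $\inv(S_\pi,\overline S_\pi)$ exactly so that $\inv_0$ behaves like an honest multiset inversion statistic; the content of the theorem is essentially that the author's definitions in \eqref{def:inv} and \eqref{def+maj+partial} were engineered to make this reduction work.

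Finally I would remark that the $z=0$ specialization recovers the first equality of \eqref{eq+gf+maj} (hence the equidistribution part of Theorem~\ref{thm+gf+maj+rlmin+partial} with $\beta=1$), since the product factors collapse to $1$ and \eqref{ide+HRW+multiset} at $z=0$ is MacMahon's equidistribution \eqref{ide+equi+mac}; summing $q^{\inv(S,\overline S)}$ times $[n]_q!/\big(\prod\text{multiplicities}\big)=[n]_q!/[k]_q!$ over $S$ with $|S|=n-k$ and invoking \eqref{gaussian} produces ${n\brack k}_q[n]_q!/[k]_q!$, matching \eqref{eq+gf+maj}. This sanity check also confirms the bookkeeping of the block decomposition used above.
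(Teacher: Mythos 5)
Your proposal is correct and follows essentially the same route as the paper: decompose $\S_n^k$ by the value set $S_\pi$, observe that $\Des$, $\inv_0$, $\maj_0$ and $\inv^{\Box,i}$ depend only on the underlying word so the factor $q^{\inv(S_\pi,\overline S_\pi)}$ is common to both sides, and invoke Wilson's identity \eqref{ide+HRW+multiset} for the multiset $\{s_1,\ldots,s_{n-k},\lozenge^k\}$. The only (immaterial) difference is that the paper first standardizes every block to $J_0=[n-k]$ via an order-preserving bijection and factors out ${n\brack k}_q$ using \eqref{gaussian}, whereas you apply Wilson's identity block by block.
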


Let $\LD_{n,k}$ be the set of all Laguerre digraphs on $[n]$ with $k$ paths.
 In order to define suitable statistics on Laguerre digraphs, as in~\cite{Ze93, DS23},
we need to make a convention about boundary conditions at the two-ends of a path.
The idea is to extend a Laguerre graph $G$ on the vertex set $[n]$ to a digraph $\widehat{G}$
on the vertex set $[n]\cup \{0\}$ by decreeing
that any vertex $i\in [n]$ that has in-degree (resp. out-degree) 0 in $G$
will receive an incoming (resp. outgoing) edge from (resp. to) the vertex 0.
In this way, each vertex $i\in [n]$ will have a unique predecessor $p(i)\in [n]\cup \{0\}$
and a unique successor $s(i)\in [n]\cup \{0\}$.
We then say that a vertex $i\in [n]$ is a
\begin{enumerate}
\item a peak ($\pk$), if $p(i)<i>s(i)$;
\item a valley ($\val$), if $p(i)>i<s(i)$;
\item a double ascent ($\da$), if $p(i)<i<s(i)$;
\item a double descent ($\dd$), if $p(i)>i>s(i)$;
\item a fixed point ($\fp$), if $p(i)=i=s(i)$.
\end{enumerate}

We write $\pk(G)$, $\val(G)$, $\da(G)$, $\dd(G)$, $\fp(G)$ for the number of vertices $i\in [n]$
that are, respectively, peaks, valleys, double ascents, double descents and fixed points in Laguerre digraph $G$.
We also write $\cyc(G)$ for the number of cycles in Laguerre digraph~$G$.

\begin{theorem}\label{thm+gen+Jac+Rog+Laguerre}
The generalized Jacobi-Rogers polynomial $\mu_{n,k}(\bm{\beta},\bm{\gamma})$ with
\begin{align}
\gamma_k=k(u_3+u_4)+\alpha\beta, \quad \beta_k=k(\beta-1+k)u_1u_2
\end{align}
is the following enumerative polynomial of Laguerre digraphs on $[n]$ with $k$ paths
\be\label{jacobi-laguerre}
\mu_{n,k}(\bm{\beta},\bm{\gamma})=\sum_{G\in \LD_{n,k}}u_1^{\pk(G)-k}u_2^{\val(G)}u_3^{\da(G)}u_4^{\dd(G)}\alpha^{\fp(G)}\beta^{\cyc(G)}.
\ee
\end{theorem}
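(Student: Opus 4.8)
The plan is to establish \eqref{jacobi-laguerre} by showing that the right-hand side satisfies the defining recurrence \eqref{rec+catalan+matrix} for $\mu_{n,k}(\bm{\beta},\bm{\gamma})$ with the stated $\gamma_k,\beta_k$. Write $Q_{n,k}$ for the right-hand side of \eqref{jacobi-laguerre}. The boundary conditions are immediate: $\LD_{0,0}$ consists of the empty digraph, so $Q_{0,0}=1$, and $\LD_{n,k}=\varnothing$ whenever $k<0$ or $k>n$ (a Laguerre digraph on $[n]$ has at most $n$ paths and at least $0$), so $Q_{n,k}=0$ there. It therefore suffices to prove
\[
Q_{n,k}=Q_{n-1,k-1}+\gamma_k\,Q_{n-1,k}+\beta_{k+1}\,Q_{n-1,k+1}\qquad(n\ge k\ge 0).
\]

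The key step is a ``vertex-$n$ removal'' bijection. Given $G\in\LD_{n,k}$, delete the largest vertex $n$ and its incident arrows, then reconnect: if $n$ had predecessor $p$ and successor $s$ in $\widehat G$ with $p,s\in[n-1]$, splice them by adding the arrow $p\to s$; if $p=0$ (resp.\ $s=0$) just drop the dangling half-edge. This produces a Laguerre digraph $G'$ on $[n-1]$. The crucial point is that $n$, being the maximum, is never a valley and is a peak exactly when both its neighbors lie in $[n-1]$, a double ascent/descent when exactly one neighbor is $0$, and—when $n$ is an isolated vertex or a fixed-point loop—contributes accordingly; moreover removing $n$ does \emph{not} change the peak/valley/da/dd type of any other vertex, since $n$ sits above everyone. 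Classifying $G$ by the role of vertex $n$ gives three cases matching the three terms:

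\emph{(i) $n$ lies on a path as an internal or endpoint vertex increasing the path count.} Here $n$ is its own path component or an endpoint, and removal yields $G'\in\LD_{n-1,k-1}$; tracking the weight change shows this case contributes $Q_{n-1,k-1}$ (the rise of weight $1$). \emph{(ii) $n$ is a peak of a component that survives with the same number of paths,} i.e.\ $G'\in\LD_{n-1,k}$; summing over the two ways $n$ can be inserted as a peak on a path versus on a cycle, and over where it is spliced, produces the factor $\gamma_k=k(u_3+u_4)+\alpha\beta$ — the $\alpha\beta$ from creating a new fixed-point loop on a cycle, and $k(u_3+u_4)$ from inserting $n$ as a double ascent or double descent relative to the $k$ available insertion slots. \emph{(iii) $n$ closes up to reduce the path count,} giving $G'\in\LD_{n-1,k+1}$ and the factor $\beta_{k+1}=(k+1)(\beta-1+k+1)u_1u_2$ — the $u_1u_2$ because inserting $n$ as a new peak converts... and the combinatorial factor $(k+1)(\beta+k)$ counts the ways to attach the two half-edges at $n$ to endpoints among the $k+1$ paths of $G'$, with the $\beta-1$ shift accounting for the possibility of forming a new cycle versus merging two paths, weighted by $\cyc$.

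The main obstacle will be pinning down case (iii) — and the exact split of the $\gamma_k$ term — so that the combinatorial factors $k(\beta-1+k)$ and $\alpha\beta$ emerge with the correct $\beta$-exponents. This requires a careful accounting of how the cycle count $\cyc(G)$ changes: inserting $n$ onto a path either extends it (no new cycle) or joins its two ends (new cycle, factor $\beta$), and the book-keeping of ``which of the $k+1$ path-endpoints receives the incoming vs.\ outgoing half-edge of $n$'' must be reconciled with the linear-in-$\beta$, quadratic-in-$k$ shape of $\beta_{k+1}$. I expect this to follow the template of Deb–Sokal's argument for the special case (their cycle-alternating Laguerre digraph interpretation), with the present statistics $\pk,\val,\da,\dd,\fp$ refining theirs; once the bijection is set up, each of the three contributions is a finite, explicit sum and the verification is routine. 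Finally, since $Q_{n,k}$ and $\mu_{n,k}$ satisfy the same recurrence with the same initial data, they are equal, proving \eqref{jacobi-laguerre}.
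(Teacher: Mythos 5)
Your strategy---verifying the three-term recurrence \eqref{rec+catalan+matrix} for the right-hand side of \eqref{jacobi-laguerre} by deleting the largest vertex---is genuinely different from the paper's proof, which is a generating-function argument: the paper decomposes $G\in\LD_{n,k}$ into a permutation (the cycles) together with $k$ paths, obtains the exponential generating function $G^{\cyc}(z)\,(G^{\lin}(z))^k/k!$ for the $k$-th column, and then invokes Aigner's criterion for column generating functions of the form $J_0(z)F(z)^k/k!$ together with Zeng's closed forms for $G^{\cyc}$ and $G^{\lin}$. Unfortunately your combinatorial route, as written, contains concrete errors that are not merely deferred details. First, the pivotal claim that deleting $n$ ``does not change the peak/valley/da/dd type of any other vertex'' is false: the types of $n$'s neighbours do change. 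For instance, in the path $1\to 2$ the vertex $1$ is a double ascent (the digraph has weight $u_3$), while after deleting $2$ it becomes an isolated vertex, hence a peak; the factor $u_3$ in $\gamma_1$ comes precisely from this type change, not from the type of the deleted vertex. Second, since $n$ is the maximum, every neighbour of $n$ in $\widehat G$ (including the boundary vertex $0$) is smaller than $n$, so $n$ is always a peak or a fixed point---never a double ascent, double descent or valley; your explanation of the $k(u_3+u_4)$ term as ``inserting $n$ as a double ascent or double descent'' cannot be right. Third, with your splicing convention ($p\to n\to s$ becomes $p\to s$) the deletion never increases the number of paths---a path stays one path and a cycle stays a cycle---so $G'$ always lies in $\LD_{n-1,k-1}$ or $\LD_{n-1,k}$ and your case~(iii) is vacuous; the fall term $\beta_{k+1}\mu_{n-1,k+1}$ can only arise if you delete \emph{without} splicing, so that an internal path vertex splits its path and a cycle vertex opens its cycle into a path.

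Even after these repairs, the recurrence cannot be verified insertion by insertion, and this is the deeper obstruction. Appending $n$ after the terminal vertex $e$ of a path multiplies the weight by $u_3$ when $e$ satisfies $p(e)<e$ (so $e$ was a peak in $G'$ and becomes a double ascent), but by $u_1u_2/u_4$ when $p(e)>e$ (so $e$ was a double descent and becomes a valley): for example $2\to 1$ has weight $u_4$ while $2\to 1\to 3$ has weight $u_1u_2$. Thus the ratio $w(G)/w(G')$ of a single level-step insertion is not a monomial depending only on $k$, and the identity $\sum_G w(G)=\gamma_k\sum_{G'}w(G')+\cdots$ can only hold in aggregate over all $G'$, not termwise as your argument requires. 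Making this combinatorial approach rigorous would need either an involution pairing off the mismatched contributions or a Foata--Zeilberger/Biane-type history bijection in which the Motzkin-path height is not the running number of paths; it does not follow ``routinely'' from the setup you describe. I would recommend either adopting the paper's generating-function argument or working out the weight bookkeeping in full before asserting that it closes.
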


\begin{remark}
An \emph{alternating Laguerre digraph} is  a Laguerre digraph in which there are no double ascents, double descents or fixed points.  Specializing 
Theorem~\ref{thm+gen+Jac+Rog+Laguerre} to 
$\alpha=u_3=u_4=0$ and $u_1=u_2=1$, we derive  that 
the Jacobi-Rogers polynomial $\mu_{n,k}(\bm{\beta},\bm{\gamma})$ with 
$\beta_k=k(\beta-1+k)$ and $\bm{\gamma=0}$
enumerates alternating Laguerre digraphs on $[n]$  with $k$ paths with a weight $\beta$ for each cycle~\cite[Proposition~6.3]{DS23}.
\end{remark}

\section{Mahonian statistics  over partial permutations}\label{sec+gf+maj+equi}

For $\pi \in \S_{n}^k$ let $M_\pi$ be its corresponding (0,1)-matrix. 
Originally Cheng et al. \cite{CEH23} defined the inversion number ($\widetilde\inv$) 
of $M_\pi$ as the number of \emph{survival} 0's in $M_\pi$ after deleting
\begin{itemize}
\item all 0's below 1 in each column,
\item all 0's at the right of 1 in each row,
\item all 0's if there is no 1 in neither their rows nor their columns.
\end{itemize}
Similarly
we define the major index ($\widetilde{\maj}$) of $M_\pi$ as the number of  0's such that 
\begin{itemize}
\item  
 they are above  the 1 (of their columns)  which has  another  1 
 at right in the upper row,  or 
 \item  
 they are above  the 1 (of their columns)  which has  no   1 
in the upper row, or
 \item  
they are at left of  a 1 of  their rows (if their columns have no 1).
\end{itemize}
For example, if $\pi=3\,2\, 5\, \lozenge\,1\,8\,6\,\lozenge\in \S_8^{2}$, then
$$
\Des(\pi)=\{1,4,6\},\quad S_{\pi}=\{1,2,3,5,6,8\}, \quad \overline{S}_{\pi}=\{4,7\}.
$$
Hence
$\inv_0(\pi)=8$, $\inv(S_\pi, \overline{S}_{\pi})=4$, and
\[\inv(\pi)=12,\quad \textrm{and}\quad \maj(\pi)=(1+4+6)+4=15.
\]
The following is the (0,1)-matrix $M_{\pi}$ of $\pi$, where the survival 0's are replaced by $x$ 
in the left and right matrices in \eqref{01-matrix} for the computation of $\inv(\pi)=12$ 
and $\maj(\pi)=15$ in terms of (0,1)-matrix $M_\pi$, respectively.

\begin{equation}\label{01-matrix}
\left[
  \begin{array}{cccccccc}
    \mathbf{x} & \mathbf{x} & 1 & 0  & 0 & 0 & 0  & 0 \\
    \mathbf{x} & 1 & 0 & 0 & 0 & 0 & 0 & 0 \\
    \mathbf{x} & 0  & 0 & \mathbf{x}& 1 & 0 & 0 & 0 \\
    \mathbf{x} & 0 & 0 & 0  & 0 & \mathbf{x}& 0 & \mathbf{x}\\
    1 & 0 & 0 & 0  & 0 & 0 & 0 & 0 \\
    0 & 0 & 0 & \mathbf{x} & 0 & \mathbf{x} & \mathbf{x} & 1 \\
    0 & 0 & 0 & \mathbf{x} & 0 & 1 & 0  & 0 \\
    0  & 0  & 0  & 0  & 0  & 0  & 0  & 0  \\
  \end{array}
\right]=
\left[
  \begin{array}{cccccccc}
    \mathbf{x} & \mathbf{x} & 1 & 0 & 0 & \mathbf{x} & 0 & 0 \\
    \mathbf{x}& 1 & 0 & 0& 0 & \mathbf{x}  & 0& 0 \\
   \mathbf{x}& 0 & 0 & \mathbf{x} & 1 & \mathbf{x}  & 0& 0 \\
   \mathbf{x} & 0  & 0  & 0  & 0 & \mathbf{x}   & 0  & 0  \\
    1 & 0 & 0 & 0 & 0 & \mathbf{x} & 0 & 0 \\
    0 & 0 & 0 &\mathbf{x} & 0 & \mathbf{x}  & \mathbf{x} & 1 \\
    0 & 0 & 0 & \mathbf{x} & 0 & 1 & 0 & 0 \\
     0  & 0  & 0 & 0  & 0  & 0  & 0  & 0  \\
  \end{array}
\right].
\end{equation}

The following result shows that the two latter definitions of inversion number 
and major index in terms of (0,1)-matrix are equivalent to the definitions on Laguerre words.
\begin{prop}\label{prop+inv+word}
For $\pi \in \S_{n}^k$, we have
\begin{equation}\label{eq:inv}
\inv(\pi)=\widetilde{\inv}(\pi),\quad \maj(\pi)=\widetilde{\maj}(\pi).
\end{equation}
\end{prop}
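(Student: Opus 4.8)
The plan is to prove the two equalities in \eqref{eq:inv} separately by carefully matching the deletion/survival rules on the $(0,1)$-matrix $M_\pi$ with the combinatorial quantities $\inv_0(\pi)+\inv(S_\pi,\overline S_\pi)$ and $\maj_0(\pi)+\inv(S_\pi,\overline S_\pi)$. First I would fix notation: for a row $i$ of $M_\pi$ that contains a $1$ (i.e.\ $i\in I$), that $1$ sits in column $\pi_i=f(i)$; for a column $j$ that contains a $1$, that $1$ sits in some row, and $j\in S_\pi$ exactly when column $j$ contains a $1$. The three bullet types in each definition partition the surviving $0$'s into groups that I claim are in bijection with, respectively, (a) the ``permutation-part'' inversions counted by $\inv_0$ or the descent contributions counted by $\maj_0$, and (b) the between-set inversions $\inv(S_\pi,\overline S_\pi)=|\{(a,b)\in S_\pi\times\overline S_\pi: a>b\}|$, which correspond exactly to the $0$'s lying in a $0$-column $j$ (so $j\in\overline S_\pi$) that are at the left of a $1$ in their row.

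For the inversion identity $\inv(\pi)=\widetilde\inv(\pi)$, I would argue as follows. A $0$ in position $(i,j)$ survives the first two deletion rules iff it is above the $1$ of its column and to the left of the $1$ of its row; if additionally its column has a $1$ (say in row $i'$, so $\pi_{i'}=j$) and its row has a $1$ (in column $\pi_i$), then the surviving condition translates to $i<i'$ and $\pi_i>j=\pi_{i'}$, i.e.\ the pair $(i,i')$ contributes to $\inv_0$ via $\inv^{\Box,i'}(\pi)$; conversely every such inversion of the underlying word on positions carrying numbers gives exactly one such surviving $0$. The $0$'s that survive the third rule (no $1$ in their row or column) together with those having a $1$ only in their column account for the hole-related and between-set contributions; by the Gaussian-binomial identity \eqref{gaussian} and the explicit description of $S_\pi,\overline S_\pi$, the count of surviving $0$'s that lie in a $0$-column to the left of the $1$ in their (nonzero) row is precisely $\inv(S_\pi,\overline S_\pi)$, and one checks no $0$ is double-counted and none is omitted. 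Summing the two groups gives $\widetilde\inv(\pi)=\inv_0(\pi)+\inv(S_\pi,\overline S_\pi)=\inv(\pi)$. Since $\widetilde\inv$ is already known (via Cheng et al.) to equal $\inv$ in this sense, part of this may be cited, but I would still spell out the matching with the word-based $\inv_0$ for completeness.

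For the major-index identity $\maj(\pi)=\widetilde\maj(\pi)$, the key observation is the classical ``descent-column'' reformulation of $\maj$: if $i_1<i_2<\cdots<i_d$ are the positions of the descents, then $\maj_0(\pi)=\sum_{t}i_t=\sum_{t}|\{\ell:\ell\le i_t\}|$, and this equals the number of cells $(\ell,\cdot)$ with $\ell\le i_t$ for some descent $t$, counted with multiplicity. The three bullets in the definition of $\widetilde\maj$ are engineered so that a surviving $0$ in column $j$ (when column $j$ has its $1$ in row $r$) records exactly one unit of $i_t$ for the descent located in the appropriate row: the first two bullets handle the cases where the relevant $1$ does or does not have a companion $1$ to its upper-right, which is precisely the dichotomy of whether position $r-1$ carries a number larger than $\pi_r$ (a descent) or carries a smaller number/hole. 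I would make this precise by induction on $n$ or by a direct cell-counting argument, showing that the surviving $0$'s in nonzero columns biject with the multiset $\{(\ell,t):\ell\le i_t,\ t\in[d]\}$, hence number $\maj_0(\pi)$; and the surviving $0$'s in $0$-columns (third bullet) again give $\inv(S_\pi,\overline S_\pi)$, exactly as in the inversion case since the rule is verbatim the same.

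The main obstacle I anticipate is the bookkeeping in the second and third bullets of $\widetilde\maj$: correctly verifying that ``above the $1$ of its column which has another $1$ at right in the upper row'' reproduces the weight $i$ of a descent at the right spot, with no off-by-one error and with the correct handling of the topmost row and of columns whose $1$ lies in row $1$. I would organize this by reading each column top-to-bottom and attributing its surviving $0$'s to the descent sitting just above its $1$, then check the total telescopes to $\sum_{i\in\Des(\pi)}i$. The worked example $\pi=3\,2\,5\,\lozenge\,1\,8\,6\,\lozenge$ in \eqref{01-matrix}, where both sides equal $12$ and $15$ respectively, serves as a sanity check for the matching and would guide the case analysis.
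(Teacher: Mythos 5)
Your treatment of the first identity follows essentially the same case analysis as the paper's proof: surviving $0$'s in a column containing a $1$ match the inversions counted by $\inv_0$, and surviving $0$'s in a $0$-column match $\inv(S_\pi,\overline S_\pi)$. Two small blemishes there: the $0$'s with no $1$ in their row \emph{or} column are \emph{deleted} by the third rule, not ``surviving'' it, and the Gaussian-binomial identity \eqref{gaussian} plays no role here --- the matching with $\inv(S_\pi,\overline S_\pi)$ is a direct bijection. Also note that the $0$'s having a $1$ only in their column (i.e.\ those in a row $\ell$ with $\pi_\ell=\lozenge$) are the inversions of $\inv_0$ with a hole on the left, since $\lozenge$ is the maximal letter; they belong to $\inv_0$, not to the between-set count.

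The genuine gap is in the major-index half, precisely in the dichotomy you assign to the first two bullets of $\widetilde{\maj}$. You write that they distinguish ``whether position $r-1$ carries a number larger than $\pi_r$ (a descent) or carries a smaller number/hole,'' i.e.\ you treat a hole in the row above as the \emph{non}-descent alternative. Under the order $1<2<\cdots<n<\lozenge$, a hole at position $r-1$ followed by a number at position $r$ is always a descent, and the second bullet (``the $1$ of the column has no $1$ in the upper row'') is exactly the device that captures these hole-induced descents; the true non-descent case, in which row $r-1$ has its $1$ strictly to the \emph{left} of column $\pi_r$, is covered by neither bullet, so those $0$'s are not counted. The correct statement is: the $i-1$ zeros above the $1$ in column $\pi_i$ are counted if and only if $i-1\in\Des(\pi)$, whence the first two bullets contribute $\sum_{d\in\Des(\pi)}d=\maj_0(\pi)$ and the third bullet contributes $\inv(S_\pi,\overline S_\pi)$ exactly as in the inversion case. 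With your dichotomy, the descent at position $4$ of the running example ($\pi_4=\lozenge>\pi_5=1$), which supplies $4$ of the $11$ units of $\maj_0(\pi)$ via the four $0$'s above the $1$ in column $1$, would be mismatched. Once this is repaired the rest of your cell-counting plan goes through and agrees with what the paper leaves implicit by ``the second can be proved similarly''; no induction on $n$ is needed.
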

\begin{proof} 
We just prove the first identity for inversion numbers because the second can be proved similarly.
Let $M_\pi$ be the (0,1)-matrix associated to $\pi \in \S_{n}^k$.
There are two cases for a column having a survival 0.
\begin{itemize}
\item
If a survival 0 in the $j$th column  has a 1 below, say $\pi_i=j$,
then this 0 must be in the $\ell$th row with $\ell<i$ and $j$th column of $M_{\pi}$ with a 1 at the right ($\pi_\ell> \pi_i$)
or without 1 in the $\ell$th row ($\pi_\ell=\lozenge$), i.e., $\pi_\ell>\pi_i$,
this corresponds to an inversion $\pi_{\ell} > \pi_i$ and $1 \leq \ell < i$ in $\Inv^{\Box,i}(\pi)$
and vice versa.
\item
If a survival 0 in the $j$th column does not have a 1 below,
then there must be a 1 at the right.
Assume that this 0 is at position $(i,j)$ of $M_{\pi}$,
then $j \in \overline{S}_{\pi}$ and there exists some $\ell > j$ such that $\pi_i=\ell$ and $\ell \in S_{\pi}$.
Therefore, this 0 enumerates an inversion in $\Inv(S_{\pi},\overline{S}_{\pi})$ and vice versa.
\end{itemize}
Combining the above cases, the proof is complete.
\end{proof}

Given $\pi \in \S_{n}^{k}$, define the set of all inversion indexes starting at a letter of $\pi$
which is different from $\lozenge$ as
\begin{equation}\label{lehmer+code+excluding+lozenge}
\Inv_{\blacklozenge}(\pi):=\{(i,j)\,|\,1 \leq i < j\leq n, \pi_i > \pi_j, \pi_i \neq \lozenge \}
\end{equation}
and denote by $\inv_\blacklozenge(\pi)$ the cardinality of set $\Inv_\blacklozenge(\pi)$.
Similarly, define the set of all inversion indexes starting at symbols $\lozenge$ of $\pi$ as
\begin{equation}\label{lehmer+code+including+lozenge}
\Inv_{\lozenge}(\pi):=\{(i,j)\,|\,1 \leq i < j\leq n, \pi_i > \pi_j, \pi_i = \lozenge \}
\end{equation}
and denote by $\inv_{\lozenge}(\pi)$ the cardinality of set $\Inv_{\lozenge}(\pi)$.
Define
$\widetilde\inv_{\blacklozenge}(\pi):=\inv_{\blacklozenge}(\pi)+\inv(S_{\pi},\overline{S}_{\pi})$.
By \eqref{def:inv}, we have
\begin{equation}\label{iden+inv+black+white}
\inv(\pi)=\widetilde\inv_{\blacklozenge}(\pi)+\inv_{\lozenge}(\pi).
\end{equation}

Let $\binom{[n]}{k}$ denote the set of all subsets of $k$ elements of $[n]$. 
For $I\in \binom{[n]}{k}$
let
$$
\widetilde\S_{n}^{k}(I)=\{\pi\in \S_n^k\, |\, \pi_i=\lozenge \;\; \text{for} \;\; i \in I\}.
$$
%
For example, if $n=3$ and $I=\{2\}$,
then $$\widetilde\S_{3}^{1}(\{2\})=\{1\lozenge2, 1\lozenge3,2\lozenge3,2\lozenge1,3\lozenge1,3\lozenge2\}.$$
Let $w(\pi)=\beta^{\rlmin(\pi)}q^{\widetilde\inv_{\blacklozenge}(\pi)}$. 
Then the corresponding terms are as follows.
{\small
\begin{center}
\vspace{-0.8em}
\begin{table}[ht]\caption{The weights $w(\pi)$ in $\widetilde\S_{3}^{1}(\{2\})$}
\vspace{-0.5em}
\begin{tabular}{ccccccc} 
\toprule[1.5pt]
$\pi$ & $1\lozenge 2$ & $1\lozenge 3$ & $2\lozenge 3$ & $2\lozenge 1$&$3\lozenge 1$&
$3\lozenge 2$
\\ 
\midrule[0.5pt]
\rule{0pt}{10pt}
$S_\pi \backslash \overline{S}_\pi$  & $\{1,\,2\}\backslash \{3\}$ & $\{1,\,3\}\backslash  \{2\}$  & $\{2,\,3\}\backslash  \{1\}$ &$\{1,2\} \backslash \{3\}$
&$\{1,3\} \backslash \{2\}$&$\{2,3\} \backslash \{1\}$\\ 
\rule{0pt}{10pt}
$w(\pi)$  & $\beta^2$ & $\beta q$  & $q^2$&$\beta q$ &$\beta q^2$&$q^3$ \\ 
\bottomrule[1.5pt]
\vspace{-3.0em}
\end{tabular}
\label{table:nonlin}
\end{table}
\end{center}
}
Hence
$$
\sum_{\pi \in \widetilde\S_{3}^{1}(\{2\})}\beta^{\rlmin(\pi)}q^{\widetilde\inv_{\blacklozenge}(\pi)}=(\beta+q)(\beta+q+q^2).
$$

The following result was proved in~\cite[Theorem 3.2]{CEH23} using (0,1)-matrix.
For completeness we provide a proof using Laguerre~words.
\begin{lemma}\label{lem+partial+rlmin+inv+black}
 We have
\begin{align}
    \sum_{\pi \in \S_{n}^{k}}\beta^{\rlmin(\pi)}q^{\inv(\pi)}
 = {n\brack k}_q [k+1]_{\beta,q}\cdots [n]_{\beta,q}.
\end{align}
\end{lemma}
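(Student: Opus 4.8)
The plan is to establish the identity by induction on $n$, peeling off the largest letter $n$ (or the holes) and tracking how $\rlmin$, $\inv_0$, and the between-set inversion statistic $\inv(S_\pi,\overline{S}_\pi)$ change. First I would record the factorization structure we are aiming at: the right side equals ${n\brack k}_q\cdot[k+1]_{\beta,q}\cdots[n]_{\beta,q}$, and by \eqref{gaussian} the $q$-binomial is itself a sum over ordered partitions $S\uplus\overline{S}=[n]$ with $|S|=n-k$ weighted by $q^{\inv(S,\overline{S})}$. So it is natural to fix the pair $(S_\pi,\overline{S}_\pi)$ — equivalently fix the set $S$ of ``landing values'' and $\overline{S}=[n]\setminus S$ of unhit values — and show that, summing only over partial permutations $\pi$ with that prescribed image set $S$,
\begin{equation*}
\sum_{\pi\,:\,S_\pi=S}\beta^{\rlmin(\pi)}q^{\inv_0(\pi)} = [k+1]_{\beta,q}\cdots[n]_{\beta,q}.
\end{equation*}
Multiplying this by $q^{\inv(S,\overline S)}$ and summing over all admissible $S$ via \eqref{gaussian} then yields the claim, since $\inv(\pi)=\inv_0(\pi)+\inv(S_\pi,\overline S_\pi)$ and the right-hand factor does not depend on $S$.

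Second, to prove the inner identity I would build the Laguerre word $\pi=\pi_1\cdots\pi_n$ by a right-to-left insertion of the symbols in decreasing order of value, treating the $k$ holes $\lozenge$ as a single largest letter with multiplicity $k$; this is the standard way the factor $[k+1]_{\beta,q}\cdots[n]_{\beta,q}$ arises as a telescoping product of ``insert letter $j$'' weights. Concretely, reading values $n,n-1,\dots,1$ and finally the $k$ holes, at the step where we place letter $j$ into a word of the already-placed larger letters there are (roughly) $j$ gaps; inserting $j$ at the leftmost gap makes $j$ a right-to-left minimum of the current word and contributes a factor $\beta$, while inserting at the $i$th gap from the right contributes $q^{i-1}$ to $\inv_0$ because it creates exactly $i-1$ new inversions with larger letters sitting to its right. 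Summing over the $j$ positions gives the factor $\beta-1+1+q+\cdots+q^{j-1}=[j]_{\beta,q}$ — wait, one must be careful that only letters of $[n]$, not holes, can be right-to-left minima, and that holes never contribute $\beta$; this is exactly why the product starts at $[k+1]_{\beta,q}$ (the $k$ holes, being the largest, are inserted first and each contributes only a $q$-power with no $\beta$ option, giving $[1]_q\cdots[k]_q$, while the subsequent genuine letters $n,\dots,1$ contribute the remaining $[k+1]_{\beta,q}\cdots[n]_{\beta,q}$ after the reindexing forced by $|S|=n-k$). The delicate bookkeeping here — that inserting letter $j$ as a right-to-left minimum does not disturb the right-to-left-minimum status of previously inserted smaller letters, and that the positions available for $j$ are governed by $S$ rather than by all of $[n]$ — is what makes the argument work and is where I would spend the most care.

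The main obstacle, then, is not the combinatorial identity \eqref{gaussian} (which we may quote) but correctly coupling the insertion order with the constraint $S_\pi=S$: the positions into which a value may legally be inserted are constrained by which values lie in $S$ versus $\overline S$, and one must verify that the per-step generating function is still a clean $(\beta,q)$-integer independent of these constraints once the $q^{\inv(S,\overline S)}$ prefactor has been extracted. An alternative to the insertion argument, which I would keep in reserve, is a direct induction on $n$: remove the letter $n$ from $\pi\in\S_n^k$, obtaining $\pi'\in\S_{n-1}^k$ or $\S_{n-1}^{k-1}$ depending on whether $n\in S_\pi$, and check that $\rlmin$ and $\inv$ transform by the expected shift; the recurrence $[n]_{\beta,q}!$-type relation ${n\brack k}_q[k+1]_{\beta,q}\cdots[n]_{\beta,q}$ satisfies then closes the induction. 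Either way the final step is purely formal: reassemble the fixed-$S$ sums via \eqref{gaussian} to recover the stated product. Since this lemma is attributed to \cite[Theorem 3.2]{CEH23} and we only reprove it for completeness in the Laguerre-word model, I would present whichever of the two arguments is shortest.
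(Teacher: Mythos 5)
There is a genuine gap: the factorization on which you build the whole argument is false. You claim that for a fixed image set $S$ with $|S|=n-k$,
\begin{equation*}
\sum_{\pi\,:\,S_\pi=S}\beta^{\rlmin(\pi)}q^{\inv_0(\pi)}=[k+1]_{\beta,q}\cdots[n]_{\beta,q},
\end{equation*}
independently of $S$. Take $n=2$, $k=1$. For $S=\{2\}$ the two words are $2\lozenge$ and $\lozenge 2$; neither has a right-to-left minimum (the letter $1$ is absent, so the condition ``all letters of $[\pi_i-1]$ precede $\pi_i$'' fails for the letter $2$), and the inner sum is $1+q$. For $S=\{1\}$ the words are $1\lozenge$ and $\lozenge 1$ and the inner sum is $\beta(1+q)$. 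Neither equals $[2]_{\beta,q}=\beta+q$, and the inner sum visibly depends on $S$. The total $q^{0}\beta(1+q)+q^{1}(1+q)=(1+q)(\beta+q)$ is correct, but it does not arise from your claimed factorization, so neither your insertion argument nor the appeal to \eqref{gaussian}, as you have set them up, can be completed. (Your parenthetical accounting of the holes also produces $[1]_q\cdots[k]_q=[k]_q!$ rather than ${n\brack k}_q$, another sign the bookkeeping is off; and your unexecuted induction fallback does not rescue this.)

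The repair --- and what the paper actually does --- is to condition on the set $I$ of \emph{hole positions} rather than on the image set, and to split the inversions differently: $\inv(\pi)=\inv_{\lozenge}(\pi)+\widetilde\inv_{\blacklozenge}(\pi)$, where $\inv_{\lozenge}$ counts inversions whose larger entry is a hole (this equals $\inv(\overline{I},I)$, depends only on $I$, and yields ${n\brack k}_q$ via \eqref{gaussian}), while $\widetilde\inv_{\blacklozenge}=\inv_{\blacklozenge}+\inv(S_\pi,\overline{S}_\pi)$ keeps the between-set inversions \emph{inside} the inner sum. With the hole positions frozen, one fills the $n-k$ free slots from left to right, choosing at step $\ell$ the $j^{*}$-th smallest as-yet-unused value; this contributes exactly $j^{*}-1$ to $\widetilde\inv_{\blacklozenge}$ (each smaller unused value either lands to the right, giving an ordinary inversion, or never lands at all, giving a between-set inversion) and a factor $\beta$ precisely when $j^{*}=1$, so the inner sum is $\prod_{\ell=1}^{n-k}(\beta+q+\cdots+q^{n-\ell})=[k+1]_{\beta,q}\cdots[n]_{\beta,q}$, independently of $I$. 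The essential point you are missing is that $\inv(S_\pi,\overline{S}_\pi)$ must be bundled with the value-insertion process, not extracted as a prefactor.
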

\begin{proof} 
For any $I\in \binom{[n]}{k}$
let $\sigma=\sigma_1\sigma_2\cdots\sigma_n$ 
with $\sigma_{i}=\lozenge$ if $i \in I$ and empty otherwise.
Recall that
\be\label{inv-tilde}
\widetilde\inv_{\blacklozenge}(\pi):=\inv_{\blacklozenge}(\pi)+\inv(S_{\pi},\overline{S}_{\pi}).
\ee
We make up a Laguerre word $\pi \in \widetilde\S_n^k(I)$ by choosing $j_1, j_2,\ldots, j_{n-k}$, successively,
in $[n]$ and putting them at the feasible empty positions in $\sigma$ from left.
Let $I_0=[n]$, and assume that $j_{\ell}$ is the $j_\ell^*$-th smallest element 
in $I_{\ell-1}=[n]\backslash \{j_1, j_2, \ldots, j_{\ell-1}\}$,
at step $\ell$ ($1\leq \ell\leq n-k$) choosing $j_\ell$ in the $\ell$-th feasible empty position from left results in
sending the $j_\ell^*-1$ smaller elements in $I_{\ell-1}$ to the right of $j_\ell$ or in $\overline{S}_\pi$,
so its contribution to $\widetilde\inv_{\blacklozenge}(\pi)$ is $j_\ell^*-1$;
also $j_\ell$ is a right-to-left minimum if and only if $j_\ell=1^*$, i.e., it is the minimum in $I_{\ell-1}$. 
Thus, the contribution of this insertion to the weight of $\pi$ is $\beta+q+\cdots +q^{n-\ell}$ 
because there are $n-\ell+1$ choices for $j_\ell$ in $I_{\ell-1}$.
Thus we obtain the generating polynomial
\begin{equation}\label{eq+cheng}
   \sum_{\pi \in \widetilde\S_n^k(I)}\beta^{\rlmin(\pi)}q^{\inv_{\blacklozenge}(\pi)}
 = \prod_{\ell=1}^{n-k}(\beta+q+\cdots +q^{n-\ell}),
\end{equation}
which depends only on the size $k$ of  $I\in \binom{[n]}{k}$.
For $\pi\in  \widetilde\S_n^k(I)$ let $w_I=w_1w_2\cdots w_n$ with $w_i=1$ if $i\in I$ and 0 otherwise. 
From \eqref{lehmer+code+including+lozenge} 
it is clear that $\inv_{\lozenge}(\pi)=\inv(w_I)=\inv(\overline{I}, I)$ with $\overline{I}=[n]\backslash I$.
Therefore,
\begin{align*}
    \sum_{\pi \in \S_{n}^{k}}\beta^{\rlmin(\pi)}q^{\inv(\pi)}
 &= \sum_{I\in \binom{[n]}{k}}\sum_{\pi \in \widetilde\S_n^k(I)}\beta^{\rlmin(\pi)}q^{\widetilde\inv_{\blacklozenge}(\pi)+\inv_{\lozenge}(\pi)}\\
 &= \sum_{I\in \binom{[n]}{k}}q^{\inv(\overline{I}, I)}\sum_{\pi \in \widetilde\S_n^k(I)}\beta^{\rlmin(\pi)}q^{\widetilde\inv_{\blacklozenge}(\pi)}\\
 &= {n\brack k}_q [k+1]_{\beta,q}\cdots [n]_{\beta,q},
\end{align*}
where the last line follows from \eqref{gaussian} and \eqref{eq+cheng}.
\end{proof}

We need some more definitions and preliminaries.
Recall that $\S_M$ is the all permutations of multiset $M=\{1^{m_1},2^{m_2},\ldots,r^{m_r}\}$
with $m_1+m_2+\cdots +m_r=n$ and $m_1,m_2, \ldots, m_r\geq 0$.
For any $\pi \in \S_M$ with $\pi=\pi_1\pi_2\cdots \pi_n$,
we label the $m_i$ copies of each letter $i\in [r]$ from left to right by the order of their occurrence as $i_1,i_2, \ldots, i_{m_i}$
and let $\pi^*$ be the nondecreasing rearrangement of the letters in $\pi$
such that the equal letters are ordered by their labels,
$$
\pi^*=\pi_1^*\pi_2^*\cdots \pi_n^*=1_1\cdots 1_{m_1} 2_1\cdots 2_{m_2}\cdots r_1\cdots r_{m_r}.
$$
Let $I(\pi):=(b_1,b_2,\ldots,b_n)$,
where $b_i$ is the number of letters $\pi_j$ to the right of $\pi^*_i$ in $\pi$ and $\pi^*_i > \pi_j$.
Clearly, we have $b_i \leq i-1$ and
\begin{equation}\label{b-code:inv}
\inv(\pi)=b_1+b_2+\cdots+b_n.
\end{equation}
For example, if we take $\pi=2_1\,1_1\,2_2\,6_1\,5_1\,4_1\,4_2\,3_1$, then 
$\pi^*=1_1\,2_1\,2_2\,3_1\,4_1\,4_2\,5_1\,6_1$,
$I(\pi)=(0,1,0,0,1,1,3,4)$ and $\inv(\pi)=10$.

\subsection*{Carlitz's insertion algorithm}

We now define a mapping $\psi$ on $\S_M$ recursively.  
This mapping was first used by Carlitz~\cite{Ca75} in $\S_n$ and extended to $\S_M$ in \cite{FH08, Wi16}.

Let $\pi \in \S_M$ with $I(\pi) = (b_1, b_2,\ldots, b_n)$.
For $i\in [n]$, one construct the words $\alpha_i$ on $M^{i}=\{\pi_1^*,\pi_2^*,\ldots,\pi^*_{i}\}$
starting with $\alpha_1=\pi^*_1$. 
At step $i+1$, we label the $i+1$ slots in $\alpha_i$:
in front of $\alpha_i$, between two consecutive letters of $\alpha_i$, 
and at the end of $\alpha_i$, according to the following rule:
\begin{enumerate}
\item Label the rightmost slot in $\alpha_i$ with 0;
\item Label its descent slot from {\bf right to left} with $1,2,\ldots,\des(\alpha_i)$;
\item Label the leftmost slot with $\des(\alpha_i)+1$;
\item Label the other (non-descent) slots from the left to right with $\des(\alpha_i)+2,\des(\alpha_i)+3,\ldots,i$.
\end{enumerate}
Inserting $\pi^*_{i+1}$ in the slot labelled $b_{i+1}$ in $\alpha_i$ creates a permutation $\alpha_{i+1}$.
Repeat the process until $M^n$ and define $\psi(\pi):=\alpha_n$.
It is easy to verify that $\psi$ is a bijection satisfying
\[
\maj(\alpha_{i+1})-\maj(\alpha_i)=b_{i+1},\qquad 1\leq i\leq n-1.
\]
Thus
\begin{equation}\label{b-code:maj}
\maj(\psi(\pi))=b_1+b_2+\cdots + b_n=\inv(\pi).
\end{equation}
We omit the proof, and refer the reader to \cite[Section 2.2]{Wi16} and \cite{FH08} for more~details.

For our running example $\pi=21265443$, we have
\begin{equation*}
\alpha_4= {}_{2}2_11_32_43_0,
\end{equation*}
with $\pi^*_5=4$ and $b_5=1$.
Inserting $4$ at slot 1 in $\alpha_4$, we get $\alpha_5=24123$ and finally $\psi(\pi)=26544123$.

%

Let $\pi \in \S_M$ with $\pi=\pi_1\pi_2 \cdots \pi_n$.
A letter $\pi_i$ is called a \emph{\bf right-to-left minimum}
if $\pi_i \leq \pi_j$ for all $j > i$ and $\pi_i$ is the first occurrence (from left) in $\pi$.
Denote by $\Rlmin(\pi)$ the set of all right-to-left minima in $\pi$
and $\rlmin(\pi)$ the cardinality of $\Rlmin(\pi)$.
For instance, if $\pi=21265443$, then $\psi(\pi))=26544123$
and
\[\Rlmin(\pi)=\{1,3\}=\Rlmin(\psi(\pi)).
\]

\begin{theorem}\label{thm+rlm+maj+inv+multiset}
Let  $M=\{1^{m_1},2^{m_2},\ldots,r^{m_r}\}$.
The mapping $\psi$ is a bijection on $\S_M$ such that
\begin{subequations}\label{stirling-macmahon}
\begin{align}
\Rlmin (\pi)&=\Rlmin(\psi(\pi)),\label{stirling-macmahon1}\\
(\inv,\rlmin)(\pi)&=(\maj,\rlmin)(\psi(\pi)).\label{stirling-macmahon2}
\end{align}
\end{subequations}
\end{theorem}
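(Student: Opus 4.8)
The plan is to show that Carlitz's insertion map $\psi$, already known to satisfy $\maj(\psi(\pi))=\inv(\pi)$ via the $I$-code identity \eqref{b-code:maj}, also preserves the right-to-left minimum set. The key point is that the right-to-left minima of a word are exactly the letters that are recorded with code value $b_i=0$ at the moment they are inserted; equivalently, a position carries a right-to-left minimum if and only if every letter strictly smaller than it lies to its left. I would first verify the base case ($\alpha_1=\pi_1^*=1_1$ is trivially a right-to-left minimum) and then argue inductively on the insertion step $i\mapsto i+1$.

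The heart of the argument is a local stability claim: inserting $\pi_{i+1}^*$ into the slot labelled $b_{i+1}$ of $\alpha_i$ changes the right-to-left minimum set of $\alpha_i$ only by possibly adding the newly inserted letter, and it adds it precisely when $b_{i+1}=0$. Since the letters are inserted in nondecreasing order of value (ties broken by label), when we insert $\pi_{i+1}^*$ it is a largest-so-far letter, so (a) it can never destroy an existing right-to-left minimum to its left, and (b) it is itself a right-to-left minimum of $\alpha_{i+1}$ exactly when nothing smaller sits to its right in $\alpha_{i+1}$ — but everything currently present is $\le \pi_{i+1}^*$, so this happens iff no \emph{strictly smaller} letter lies to its right, which by the slot-labelling rules (the rightmost slot gets label $0$, and only that slot puts the new letter after all current letters when... — more precisely, one checks that label $0$ is the unique slot for which no strictly smaller letter ends up to the right) is equivalent to $b_{i+1}=0$. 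I would also need the matching statement on the $\S_M$ side: in $\pi$ itself, $\pi_i^*$ contributes $b_i=0$ to the $I$-code iff $\pi_i^*$ is a right-to-left minimum of $\pi$; this is immediate from the definition of $b_i$ as the number of letters to the right of $\pi_i^*$ that are strictly smaller. Combining, the set $\{i : b_i=0\}$ simultaneously records $\Rlmin(\pi)$ and $\Rlmin(\psi(\pi))$ under the obvious correspondence of letter-occurrences, giving \eqref{stirling-macmahon1}. Then \eqref{stirling-macmahon2} follows by appending $\rlmin$ to the already-established $\inv(\pi)=\maj(\psi(\pi))$.

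A subtle point I would be careful about is the treatment of repeated letters: "right-to-left minimum" is defined with a $\le$ and a "first-occurrence-from-left" condition, so I must confirm that the slot-labelling conventions (descent slots, leftmost slot, the rightmost slot getting $0$) interact correctly with equal letters — in particular that inserting a new copy $i_j$ of a letter already present does not spuriously create or annihilate a right-to-left minimum among the existing copies of $i$, and that the "first occurrence" bookkeeping is consistent before and after insertion. I expect this to be the main obstacle: the combinatorics of ties in the labelling rule. Everything else is a clean induction leveraging Theorem statements and constructions already set up in the excerpt (the $I$-code, the insertion algorithm, and the identity \eqref{b-code:maj}).
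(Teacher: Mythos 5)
Your overall strategy --- pushing the right-to-left-minimum set through Carlitz's insertion one step at a time and matching it against the zero entries of the code $I(\pi)$ --- is the same as the paper's, and for the case of a set (all $m_i=1$) your induction does close. For a genuine multiset, however, the two claims at the heart of your argument are false as stated, and the issue you defer as ``the main obstacle'' is precisely the content of the proof. First, $\Rlmin(\pi)$ is \emph{not} $\{\pi^*_i : b_i=0\}$: for $\pi=1_11_2$ one has $b_1=b_2=0$ but only $1_1$ is a right-to-left minimum, because of the first-occurrence clause. The correct characterization, which the paper uses, is that $\pi^*_i\in\Rlmin(\pi)$ iff $b_i=0$ \emph{and} $\pi^*_i$ is the leftmost copy of its value in $\pi$. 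Second, your local stability claim (a) --- that inserting the largest-so-far letter never destroys an existing right-to-left minimum --- can fail through the first-occurrence clause: if a later copy $v_j$ ($j\ge 2$) were inserted to the \emph{left} of $v_1$, then $v_1$ would lose its first-occurrence status and drop out of $\Rlmin$. Claim (b) fails symmetrically: when $b=0$ the new letter is appended at the right end, but if it is a later copy $v_j$ with $j\ge2$ it is not a right-to-left minimum even though its code is $0$.

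The missing ingredients, supplied in the paper's proof, are the following tie-breaking facts. If $v_1\in\Rlmin(\pi)$, then every later copy $v_j$ also has code $0$ (anything to the right of $v_j$ in $\pi$ is also to the right of $v_1$, hence $\ge v$), so all copies of $v$ are appended at the right end in label order; thus $v_1$ retains both its minimality and its first-occurrence status in $\psi(\pi)$. Conversely, if $\pi^*_i$ is a right-to-left minimum of $\psi(\pi)$ but $b_i\ge 1$, the insertion into a slot of positive label leaves a strictly smaller letter to its right in $\alpha_i$ and hence in $\psi(\pi)$, a contradiction; and if $\pi^*_i$ is not the leftmost copy of its value in $\pi$ yet is leftmost in $\psi(\pi)$, it must have been inserted to the left of the already-present first copy, which forces $b_i\ge 1$ and reduces to the previous case. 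Without these arguments your induction does not close; with them, your proof coincides with the paper's.
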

\begin{proof} By \eqref{b-code:inv} and \eqref{b-code:maj}
we need only to verify \eqref{stirling-macmahon1}.
For any $\pi \in \S_M$ with $\pi^*=\pi_1^*\pi_2^*\cdots \pi_n^*$ and $I(\pi)=(b_1,b_2, \ldots, b_n)$, by definition,
a letter $\pi^*_i$ is in $\Rlmin(\pi)$ ($i\in [n]$) if and only if $b_i=0$ 
and $\pi^*_i$ is the first occurrence from left in $\pi$.

Assume that $\pi^*_i\in \Rlmin(\pi)$, then $b_i=0$,
by definition of mapping $\psi$, we have $\alpha_i=\alpha_{i-1}\pi^*_i$,
so $\pi^*_i$ is a right-to-left minimum in $\alpha_j$ for $j\geq i$
because $\pi^*_j\geq \pi^*_i$ and  $b_j=0$ if $\pi^*_j=\pi_i^*$, 
which implies that $\pi^*_i$ is the first occurrence in $\psi(\pi)$.
Hence $\pi^*_i \in \Rlmin(\psi(\pi))$.

Assume that $\pi^*_i \in \Rlmin(\psi(\pi))$, then $\pi^*_i$ is the first occurrence in $\psi(\pi)$ and $b_i=0$.
If $b_i \geq 1$, then there would be a letter $\gamma<\pi_i^*$ at the right of $\pi^*_i$ in $\alpha_i$, 
and then in all $\alpha_j$ with $j\geq i$, which contradicts the assumption.
According to the action of $\psi$, if $\pi^*_i$ is not the first occurrence in $\pi$, 
then $b_i \geq 1$ because $\pi^*_i$ is the first occurrence in $\psi(\pi)$.


Summarizing, we have proved that $\pi^*_i$ is the first occurrence in $\pi$ and $b_i=0$, 
that is, $\pi^*_i \in \Rlmin(\pi)$.
\end{proof}

\begin{remark}
Identity~\eqref{stirling-macmahon} is known for  $\S_n$, see \cite[Theorem 5.6]{BW91}.
\end{remark}

\begin{proof}[\bf Proof of Theorem \ref{thm+gf+maj+rlmin+partial}]
For  any $J\in \binom{[n]}{n-k}$ let
\begin{equation*}
\S_n^k(J)=\{\pi \in \S_{n}^k\,|\, S_\pi=J\}.
\end{equation*}
Thus
\begin{align*}
    \sum_{\pi \in \S_{n}^{k}}\beta^{\rlmin(\pi)}q^{\inv(\pi)}
 &= \sum_{J\in \binom{[n]}{n-k}} q^{\inv(J,\overline{J})} \sum_{\pi \in \S_n^k(J)}\beta^{\rlmin(\pi)}q^{\inv_0(\pi)}, \\
    \sum_{\pi \in \S_{n}^{k}}\beta^{\rlmin(\pi)}q^{\maj(\pi)}
 &= \sum_{J\in \binom{[n]}{n-k}} q^{\inv(J,\overline{J})}
 \sum_{\pi \in \S_n^k(J)}\beta^{\rlmin(\pi)}q^{\maj_0(\pi)}.
\end{align*}
For any $\pi \in \S_n^k(J)$, we show that the bijection $\psi$ in Theorem~\ref{thm+rlm+maj+inv+multiset} has the property
\begin{align}
\inv_0(\pi)&=\maj_0(\psi(\pi)),\\
\quad \rlmin(\pi)&=\rlmin(\psi(\pi)).
\end{align}
Let $i$ be the largest integer in $[n]$ such that $[i] \subseteq J$.
Clearly $\Rlmin(\pi) \subseteq [i]$ by definition.
Thus, we need only to consider the restriction of $\pi$ on $[i]$, denoted $\pi|_{[i]}$, 
which preserves  the order of the elements of $[i]$ in $\pi$ (as 3\,2\,1 in $3\,2\, 5\, \lozenge\,1\,8\,6\,\lozenge$).
So, statistic $\rlmin(\psi(\pi))$ only depends on $\pi|_{[i]}$.
Applying Theorem \ref{thm+rlm+maj+inv+multiset} with $M=[i]$, the result is immediate. 
The proof is completed by Lemma~\ref{lem+partial+rlmin+inv+black}.
\end{proof}

\begin{proof}[\bf Proof of Theorem \ref{thm+main+inv+maj+ide}]
Let $J_0=[n-k]$. It is clear that the order preserving mapping from $J_0$ to
any $J\in \binom{[n]}{n-k}$ leads to a bijection from $\S_n^k(J_0)$ to $\S_n^k(J)$,
which keeps the traces of the integer valued statistics $\inv_0$, $\maj_0$ and $\inv^{\Box,i}$,
and the set valued statistic $\Des$.
Therefore,
combining with \eqref{gaussian} we have
\begin{align}\label{eq+iden+inv+dec}
  &\quad \sum_{\pi \in \S_{n}^k}q^{\inv(\pi)}\prod_{i \in \Des(\pi)}
     \left(1+\frac{z}{q^{1+\inv^{\Box,i}(\pi)}}\right) \nonumber \\
 &= \sum_{J\in \binom{[n]}{n-k}}q^{\inv(J,\overline{J})}\sum_{\pi \in \S_n^k(J)}q^{\inv_0(\pi)}
    \prod_{i \in \Des(\pi)}\left(1+\frac{z}{q^{1+\inv^{\Box,i}(\pi)}}\right)\nonumber\\
 &= {n\brack k}_q \sum_{\pi\in \S_n^k(J_0)}q^{\inv_0(\pi)}\prod_{i \in \Des(\pi)}\left(1+\frac{z}{q^{1+\inv^{\Box,i}(\pi)}}\right).
\end{align}
Similarly we derive
\begin{equation*}
    \sum_{\pi \in \S_{n}^k(J)}q^{\maj(\pi)}\prod_{i=1}^{\des(\pi)}\left(1+\frac{z}{q^{i}} \right)
  = \sum_{\pi\in \S_n^k(J_0)}q^{\maj_0(\pi)}\prod_{i=1}^{\des(\pi)}\left(1+\frac{z}{q^{i}} \right),
\end{equation*}
and then
\begin{align}\label{eq+iden+maj+dec}
 \quad \sum_{\pi \in \S_{n}^k}q^{\maj(\pi)}\prod_{i=1}^{\des(\pi)}\left(1+\frac{z}{q^{i}} \right)
 =  {n\brack k}_q \sum_{\pi\in \S_n^k(J_0)}q^{\maj_0(\pi)}\prod_{i=1}^{\des(\pi)}\left(1+\frac{z}{q^{i}} \right).
\end{align}
 Thus, it suffices  to verify the following identity
\begin{equation}\label{eq+iden+inv+maj+dec}
    \sum_{\pi\in \S_n^k(J_0)}q^{\inv_0(\pi)}\prod_{i \in \Des(\pi)}\left(1+\frac{z}{q^{1+\inv^{\Box,i}(\pi)}}\right)
  = \sum_{\pi\in \S_n^k(J_0)}q^{\maj_0(\pi)}\prod_{i=1}^{\des(\pi)}\left(1+\frac{z}{q^{i}} \right),
\end{equation}
which follows from identity \eqref{ide+HRW+multiset} with the multiset $M=\{1, \ldots, n-k,\lozenge^k\}$.
\end{proof}

\section{Eulerian statistic over Laguerre digraphs}\label{sec+eulerian+partial}

Let $\S_n$ denote the set of all permutations of $[n]$.
Given a permutation $\pi\in \S_n$, each value $i$, $1\leq i\leq n$,
can be classified according to one of the following five cases:
\begin{enumerate}
\item a cycle peak ($\cpk$), if $\pi^{-1}(i)<i>\pi(i)$;
\item a cycle valley ($\cval$), if $\pi^{-1}(i)>i<\pi(i)$;
\item a cycle double rise ($\cda$), if $\pi^{-1}(i)<i<\pi(i)$;
\item a cycle double fall ($\cdd$), if $\pi^{-1}(i)>i>\pi(i)$;
\item a fixed point ($\fix$), if $\pi(i)=i$.
\end{enumerate}
We write $\cpk(\pi)$, $\cval(\pi)$, $\cda(\pi)$, $\cdd(\pi)$, $\fix(\pi)$ for the number of values $i \in [n]$
that are, respectively, cycle peaks, cycle valleys, cycle double rises, cycle double falls and fixed points in permutation $\pi$.
It is easy to see that $\cpk(\pi)=\cval(\pi)$.
Define the refinement of Eulerian polynomials 
\begin{align}\label{poly:zeng}
    P^{\textrm{cyc}}_n(\u, \alpha, \beta)
 := \sum_{\pi\in \S_n}u_1^{\cpk(\pi)}u_2^{\cval(\pi)}u_3^{\cda(\pi)}u_4^{\cdd(\pi)}\alpha^{\fix(\pi)}\beta^{\cyc(\pi)}
\end{align}
with $\u=(u_1,u_2, u_3,u_4)$, and the exponential generating 
function~\cite{Ze93} 
\be
    G^{\textrm{cyc}}(z)
 := 1+\sum_{n=1}^{\infty}P^{\textrm{cyc}}_n(\u,\alpha, \beta)\frac{z^n}{n!}.
\ee

\begin{lemma}\cite[Th\'eor\`eme 1]{Ze93}\label{lem-fix-carlitz-scoville} 
We have
\begin{align}\label{fix-carlitz-scoville}
   G^{\cyc}(z)
 = e^{\alpha\beta z}\left(\frac{\alpha_1-\alpha_2}{\alpha_1 e^{\alpha_2z}-\alpha_2 e^{\alpha_1 z}}\right)^\beta,
\end{align}
where $\alpha_1\alpha_2=u_1u_2$ and $\alpha_1+\alpha_2=u_3+u_4$.
\end{lemma}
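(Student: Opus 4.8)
The plan is to obtain a recurrence for $P_n^{\cyc}(\u,\alpha,\beta)$ by inserting the largest letter, to translate it into a first-order linear PDE for the exponential generating function $G^{\cyc}(z)$, and then to verify that the claimed closed form solves this PDE together with $G^{\cyc}(0)=1$; uniqueness of the solution then yields the lemma. Concretely, I would build every permutation of $[n+1]$ from a permutation $\pi$ of $[n]$ by inserting the new maximal letter $n+1$ in one of the $n+1$ possible ways: either $n+1$ forms a new $1$-cycle (a fixed point, contributing a factor $\alpha\beta$), or it is spliced into one of the $n$ arcs $v\to\pi(v)$ of the functional digraph of $\pi$. Since $n+1$ then acquires both a smaller predecessor and a smaller successor, it is always a cycle peak and carries a factor $u_1$, while a short local check shows that splicing changes the type of exactly one old letter. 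Collecting the resulting factors gives
\[
P_{n+1}^{\cyc}=\mathcal{D}\,P_{n}^{\cyc},\qquad
\mathcal{D}:=\alpha\beta+(u_3+u_4)\,u_1\partial_{u_1}+u_1u_2\bigl(\partial_{u_3}+\partial_{u_4}+\partial_{\alpha}\bigr),
\]
and, since $\mathcal{D}$ does not involve $z$, summing over $n$ turns this into $\partial_z G^{\cyc}=\mathcal{D}\,G^{\cyc}$ with $G^{\cyc}(0)=1$.

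The heart of the argument is the case analysis behind $\mathcal{D}$, which I expect to be the main obstacle since the bookkeeping is error-prone. Classifying each letter of $\pi$ by whether its predecessor and successor are smaller or larger, one checks that splicing $n+1$ into $v\to\pi(v)$ flips exactly one direction bit: the incoming bit of the head when the arc is a rise, or the outgoing bit of the tail when the arc is a fall. Consequently a cycle peak is simultaneously the head of a rise-arc and the tail of a fall-arc, so it yields two splice positions, with factors $u_4$ (peak $\to$ double fall) and $u_3$ (peak $\to$ double rise); a cycle double rise (resp. double fall) yields a single splice with factor $u_1u_2/u_3$ (resp. $u_1u_2/u_4$); a cycle valley yields none; and the self-loop at a fixed point yields one splice turning it into a cycle valley, with factor $u_1u_2/\alpha$. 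Using $\cpk(\pi)\,w(\pi)=u_1\partial_{u_1}w(\pi)$, $\cda(\pi)\,w(\pi)=u_3\partial_{u_3}w(\pi)$, and the analogous identities for $\cdd$ and $\fix$, these contributions sum precisely to $\mathcal{D}$ (the two peak factors combining into the coefficient $u_3+u_4$).

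It then remains to check that the closed form solves $\partial_z G^{\cyc}=\mathcal{D}\,G^{\cyc}$. Writing $L=L(z)$ for the bracketed factor, so that the claim reads $G^{\cyc}=e^{\alpha\beta z}L^{\beta}$, and dividing the PDE by $\beta G^{\cyc}$, the verification reduces to a single identity for $\log L$. Using $\alpha_1+\alpha_2=u_3+u_4$ and $\alpha_1\alpha_2=u_1u_2$ together with the chain rule, I would first show that on functions of $(\alpha_1,\alpha_2)$ the combination $(u_3+u_4)\,u_1\partial_{u_1}+u_1u_2(\partial_{u_3}+\partial_{u_4})$ collapses to $\alpha_1\alpha_2(\partial_{\alpha_1}+\partial_{\alpha_2})$, while $u_1u_2\,\partial_\alpha$ produces the term $\alpha_1\alpha_2 z$. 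The whole PDE thereby reduces to
\[
\partial_z\log L-\alpha_1\alpha_2\,(\partial_{\alpha_1}+\partial_{\alpha_2})\log L=\alpha_1\alpha_2\,z,
\]
which follows by direct differentiation of $L=(\alpha_1-\alpha_2)/(\alpha_1 e^{\alpha_2 z}-\alpha_2 e^{\alpha_1 z})$: both $\partial_z\log L$ and $(\partial_{\alpha_1}+\partial_{\alpha_2})\log L$ are computed from the logarithmic derivative of the denominator, and their difference telescopes to $\alpha_1\alpha_2 z$. Since $\partial_z G^{\cyc}=\mathcal{D}\,G^{\cyc}$ with $G^{\cyc}(0)=1$ has a unique power-series solution, this completes the proof.
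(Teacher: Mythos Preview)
Your argument is correct. The paper does not actually prove this lemma; it is quoted verbatim from \cite{Ze93} as a known result, so there is nothing to compare against within the paper itself. Your method---inserting the maximal letter $n+1$ to derive the recurrence $P_{n+1}^{\cyc}=\mathcal{D}P_n^{\cyc}$, translating it into the first-order PDE $\partial_z G^{\cyc}=\mathcal{D}G^{\cyc}$, and then checking that the closed form satisfies this PDE with $G^{\cyc}(0)=1$---is the classical approach and is essentially the one used in Zeng's original paper.

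The combinatorial bookkeeping is accurate: your observation that splicing into a rise arc flips only the incoming bit of the head, while splicing into a fall arc flips only the outgoing bit of the tail, correctly accounts for all $n$ non-fixed-point insertions (since $2\cpk+\cda+\cdd+\fix=n$), and the fixed-point case is handled separately. The change of variables reducing $(u_3+u_4)u_1\partial_{u_1}+u_1u_2(\partial_{u_3}+\partial_{u_4})$ to $\alpha_1\alpha_2(\partial_{\alpha_1}+\partial_{\alpha_2})$ on functions of $(\alpha_1,\alpha_2)$ is valid, and the final identity $\partial_z\log L-\alpha_1\alpha_2(\partial_{\alpha_1}+\partial_{\alpha_2})\log L=\alpha_1\alpha_2 z$ does indeed follow by direct computation, the point being that $\partial_{\alpha_1}D+\partial_{\alpha_2}D=(e^{\alpha_2 z}-e^{\alpha_1 z})+zD$ for $D=\alpha_1 e^{\alpha_2 z}-\alpha_2 e^{\alpha_1 z}$. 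Uniqueness is immediate since the recurrence determines each $P_n^{\cyc}$ from $P_{n-1}^{\cyc}$.
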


\begin{lemma}\cite[Th\'eor\`eme 3]{Ze93}\label{lem+J-CF} 
We have
\begin{align}\label{fix-carlitz-scoville-cf}
   1+\sum_{n=1}^\infty P^{\cyc}_n(\u,\alpha,\beta)z^n
 = \cfrac{1}{1-\gamma_0 z-\cfrac{\beta_1z^2}{1-\gamma_1z-\cfrac{\beta_2z^2}{1-\gamma_2z-\cdots}}},
\end{align}
where
$\gamma_k=k(u_3+u_4)+\alpha\beta$ and $\beta_{k+1}=(k+\beta)(k+1)u_1u_2$.
\end{lemma}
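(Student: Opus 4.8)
The plan is to derive the Jacobi continued fraction for the \emph{ordinary} generating function $1+\sum_{n\ge1}P^{\cyc}_n(\u,\alpha,\beta)z^n$ from the closed form of the \emph{exponential} generating function $G^{\cyc}(z)$ furnished by Lemma~\ref{lem-fix-carlitz-scoville}, exploiting the classical link between a Riccati differential equation and a $J$-fraction. By definition of the generalized Jacobi-Rogers polynomials, the right-hand side of \eqref{fix-carlitz-scoville-cf} with $\gamma_k=k(u_3+u_4)+\alpha\beta$ and $\beta_{k+1}=(k+\beta)(k+1)u_1u_2$ has Taylor coefficients $\mu_n=\mu_{n,0}(\bm{\beta},\bm{\gamma})$, where the whole array $\mu_{n,k}$ obeys \eqref{rec+catalan+matrix}. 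So it suffices to prove $\mu_{n,0}=P^{\cyc}_n(\u,\alpha,\beta)$ for all $n\ge0$; equivalently, passing to exponential generating functions, that $\sum_{n\ge0}\mu_{n,0}(\bm{\beta},\bm{\gamma})\,z^n/n!=G^{\cyc}(z)$.

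First I would recast \eqref{rec+catalan+matrix} as a differential system for the series $\widehat M_k(z):=\sum_{n\ge0}\mu_{n,k}(\bm{\beta},\bm{\gamma})\,z^n/n!$: one has $\widehat M_k'=\widehat M_{k-1}+\gamma_k\widehat M_k+\beta_{k+1}\widehat M_{k+1}$ for all $k\ge0$ (with $\widehat M_{-1}:=0$), together with $\widehat M_k(0)=\delta_{k,0}$ and $z^k\mid\widehat M_k$. Since this system is of first order in $z$, comparing coefficients of $z^n$ turns it into a triangular recursion that determines every $\widehat M_k$ uniquely from those initial and support conditions; hence it is enough to produce \emph{any} family of power series satisfying the same system whose first member is $G^{\cyc}(z)$.

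Then, starting from Lemma~\ref{lem-fix-carlitz-scoville}, I would introduce the auxiliary series
\[
B(z):=\frac{1}{\beta u_1u_2}\Big((\log G^{\cyc})'(z)-\alpha\beta\Big),
\]
so that $(G^{\cyc})'/G^{\cyc}=\beta u_1u_2\,B+\alpha\beta$ by construction. A short computation with the explicit formula of Lemma~\ref{lem-fix-carlitz-scoville} gives $B(z)=(e^{\alpha_1z}-e^{\alpha_2z})/(\alpha_1e^{\alpha_2z}-\alpha_2e^{\alpha_1z})$, where $\alpha_1+\alpha_2=u_3+u_4$ and $\alpha_1\alpha_2=u_1u_2$ (the result being symmetric in $\alpha_1,\alpha_2$, hence a power series over the base ring); differentiating the quotient then shows that $B$ is the unique power series with $B(0)=0$ solving the Riccati equation $B'=u_1u_2\,B^2+(u_3+u_4)\,B+1$, so in particular $B$ vanishes to order exactly $1$. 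Now put $\widetilde M_k(z):=G^{\cyc}(z)\,B(z)^k/k!$. Then $\widetilde M_k(0)=\delta_{k,0}$ and $z^k\mid\widetilde M_k$; and differentiating $G^{\cyc}B^k/k!$ and substituting the Riccati relation together with the formula for $(G^{\cyc})'/G^{\cyc}$ yields $\widetilde M_k'=\widetilde M_{k-1}+\gamma_k\widetilde M_k+\beta_{k+1}\widetilde M_{k+1}$ — here all $k$-dependent terms cancel precisely because $\gamma_k$ is affine and $\beta_{k+1}$ quadratic in $k$ with exactly the coefficients in the statement. By the uniqueness observed above, $\widetilde M_k=\widehat M_k$ for every $k$; in particular $\widehat M_0=\widetilde M_0=G^{\cyc}$, that is $\mu_{n,0}=P^{\cyc}_n(\u,\alpha,\beta)$, which is the assertion.

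I expect the main obstacle to be pinning down the right auxiliary series $B$: the content is that the normalized logarithmic derivative of $G^{\cyc}$ satisfies a Riccati equation whose coefficients are tuned so that $G^{\cyc}B^k/k!$ solves the generalized Jacobi-Rogers recurrence, and in particular that the $k$-dependence disappears; once $B$ is written down, checking the Riccati equation and the recurrence is routine calculus. Alternatively one could avoid Lemma~\ref{lem-fix-carlitz-scoville} and argue combinatorially through the Foata--Zeilberger/Fran\c{c}on--Viennot correspondence between permutations in $\S_n$ weighted by $(\cpk,\cval,\cda,\cdd,\fix,\cyc)$ as in \eqref{poly:zeng} and weighted Motzkin paths of length $n$ from $(0,0)$ to $(n,0)$ — cycle valleys becoming up steps, cycle peaks becoming down steps of weight $h(h-1+\beta)u_1u_2$ at height $h$ (the summand $\beta$ recording a closed cycle), and cycle double rises, double falls and fixed points becoming level steps of weight $h(u_3+u_4)+\alpha\beta$ — after which Flajolet's theory of continued fractions yields \eqref{fix-carlitz-scoville-cf} at once; there the delicate point is the bookkeeping of cycle closures in the bijection.
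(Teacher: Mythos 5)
Your proof is correct. Note first that the paper itself offers no proof of this lemma: it is quoted from \cite[Th\'eor\`eme~3]{Ze93}. What you have written is a valid self-contained derivation from Lemma~\ref{lem-fix-carlitz-scoville}, and it is in substance identical to the machinery the paper does spell out for Theorem~\ref{thm+gen+Jac+Rog+Laguerre}: your uniqueness claim for the differential system $\widehat M_k'=\widehat M_{k-1}+\gamma_k\widehat M_k+\beta_{k+1}\widehat M_{k+1}$ together with the ansatz $\widetilde M_k=G^{\cyc}B^k/k!$ is exactly Aigner's criterion \eqref{eq:sheffer}--\eqref{Aigner:system} with $a=\alpha\beta$, $b=\beta u_1u_2$, $s=u_3+u_4$, $u=u_1u_2$, and your $B$ coincides with $G^{\lin}/u_1$ from Lemma~\ref{4.3}, whose Riccati equation is \eqref{eq-diff} divided by $u_1$. (In particular your argument proves not just the $k=0$ statement but $\mu_{n,k}=\widehat L_{n,k}/u_1^k$ for all $k$, i.e.\ Theorem~\ref{thm+gen+Jac+Rog+Laguerre} itself, with no circular appeal to the cited lemma --- arguably a cleaner logical organization than the paper's.) Two small points worth tightening: (i) the uniqueness step should be phrased as the coefficient identity $m_{n+1,k}=m_{n,k-1}+\gamma_k m_{n,k}+\beta_{k+1}m_{n,k+1}$, which is literally \eqref{rec+catalan+matrix} and determines the array by induction on $n$ from $m_{0,k}=\delta_{k,0}$ alone (the support condition $z^k\mid\widehat M_k$ then comes for free, so you need not impose it); (ii) defining $B$ by dividing by $\beta u_1u_2$ requires these to be invertible --- harmless since they are indeterminates, but it is cleaner to define $B$ directly as the unique power series with $B(0)=0$ solving $B'=1+(u_3+u_4)B+u_1u_2B^2$ (coefficients manifestly in $\Z[u_1u_2,u_3+u_4]$) and then verify $(G^{\cyc})'=(\alpha\beta+\beta u_1u_2B)\,G^{\cyc}$ from the closed form of Lemma~\ref{lem-fix-carlitz-scoville}. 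Your alternative combinatorial route via Foata--Zeilberger/Flajolet is also viable and is essentially how the cited source proceeds, but it is not needed here.
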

Define the enumerative polynomial of permutations with linear statistics 
and 0-0 boundary conditions by
\begin{equation*}
P_n^{\lin}(\u):=\sum_{\pi\in \S_n} u_1^{\ppk(\pi)}u_2^{\pval(\pi)}u_3^{\pda(\pi)}u_4^{\pdd(\pi)},
\end{equation*}
where $\ppk(\pi), \pval(\pi), \pda(\pi)$ and $\pdd(\pi)$ denote, respectively, 
the number of path peaks, path valleys, path double ascents and path double descent in the permutation $\pi$ written as a word $\pi_1\pi_2\cdots \pi_n$,
where we impose the boundary conditions $\pi_0=\pi_{n+1}=0$.
\begin{lemma}\cite[Proposition 4 and (2.5)]{Ze93}\label{4.3}
We have
\begin{equation*}
  G^{\lin}(z):=\sum_{n=1}^\infty P_n^{\lin}(\u)\frac{z^n}{n!}
 = u_1 \biggl(\frac{e^{\alpha_2 z}-e^{\alpha_1z}}{\alpha_2 e^{\alpha_1 z}-\alpha_1 e^{\alpha_2 z}}\biggr),
\end{equation*}
where $\alpha_1\alpha_2=u_1u_2$ and $\alpha_1+\alpha_2=u_3+u_4$.
Moreover,
\begin{equation}\label{eq-diff}
(G^{\lin}(z))'=u_1+(u_3+u_4)G^{\lin}(z)+u_2(G^{\lin}(z))^2.
\end{equation}
\end{lemma}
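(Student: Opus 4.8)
The plan is to establish the Riccati relation \eqref{eq-diff} first, by a combinatorial recurrence for the polynomials $P_n^{\lin}(\u)$, and only then to integrate it; the explicit formula for $G^{\lin}(z)$ will drop out as the unique solution, so no separate verification of the closed form is needed. First I would translate \eqref{eq-diff} into a recurrence: extracting the coefficient of $z^n/n!$ from $(G^{\lin})'=u_1+(u_3+u_4)G^{\lin}+u_2(G^{\lin})^2$ and using $G^{\lin}(0)=0$, the differential equation is seen to be equivalent to $P_1^{\lin}(\u)=u_1$ together with
\[
P_{n+1}^{\lin}(\u)=(u_3+u_4)\,P_n^{\lin}(\u)+u_2\sum_{i=1}^{n-1}\binom{n}{i}P_i^{\lin}(\u)\,P_{n-i}^{\lin}(\u)\qquad(n\ge 1),
\]
with the convention that the empty sum vanishes. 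Thus the whole statement reduces to proving this recurrence over $\S_{n+1}$.

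The heart of the argument is a splitting bijection at the minimum. Write $\pi\in\S_{n+1}$ as the word $0\,\pi_1\cdots\pi_{n+1}\,0$ and let $p$ be the position of the value $1$. Since every other entry and the two boundary $0$'s all lie below $1$, the type of the pivot $1$ is forced by $p$: it is a path double ascent when $p=1$ (weight $u_3$), a path double descent when $p=n+1$ (weight $u_4$), and a path valley when $2\le p\le n$ (weight $u_2$). Now cut the word at the pivot into $0\,\pi_1\cdots\pi_{p-1}\,0$ and $0\,\pi_{p+1}\cdots\pi_{n+1}\,0$. The key observation is that replacing the pivot neighbor $1$ of $\pi_{p-1}$ (and of $\pi_{p+1}$) by a boundary $0$ changes no ascent or descent, since both $0$ and $1$ lie below every surviving entry; hence each surviving entry keeps its path type. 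Consequently the weight of $\pi$ equals the weight of the pivot times the product of the path-weights of the two factor words, after order-preserving relabeling of their entries onto $[i]$ and $[n-i]$ with $i=p-1$. Summing over interior pivots $2\le p\le n$ produces $u_2\sum_{i=1}^{n-1}\binom{n}{i}P_i^{\lin}(\u)P_{n-i}^{\lin}(\u)$, the binomial recording how the values $\{2,\dots,n+1\}$ are distributed between the two factors, while the end positions $p=1$ and $p=n+1$ are the degenerate splits with one empty factor and contribute $(u_3+u_4)P_n^{\lin}(\u)$. This proves the recurrence and hence \eqref{eq-diff}.

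Finally I would integrate the Riccati equation. The logarithmic substitution $G^{\lin}=-\tfrac{1}{u_2}\,y'/y$ turns \eqref{eq-diff} into the linear equation $y''-(u_3+u_4)y'+u_1u_2\,y=0$, whose characteristic roots are precisely $\alpha_1,\alpha_2$ by virtue of $\alpha_1+\alpha_2=u_3+u_4$ and $\alpha_1\alpha_2=u_1u_2$. Writing $y=A\,e^{\alpha_1 z}+B\,e^{\alpha_2 z}$ and imposing $G^{\lin}(0)=0$ forces $A\alpha_1+B\alpha_2=0$; taking $A=\alpha_2$, $B=-\alpha_1$ and simplifying with $\alpha_1\alpha_2=u_1u_2$ recovers exactly the stated closed form for $G^{\lin}(z)$. (Equivalently, one checks directly that the stated formula satisfies \eqref{eq-diff} and vanishes at $0$, and concludes by uniqueness of the power-series solution.)

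I expect the main obstacle to lie in the combinatorial recurrence rather than in the integration: one must verify entry-by-entry that cutting at the minimum is weight-preserving, using that $0$ and $1$ are both below every other entry, correctly assign the three weights $u_2,u_3,u_4$ to the valley, double-ascent and double-descent roles of the pivot, and account for both the relabeling binomial and the two empty-factor boundary cases. Once the recurrence is secured, solving the constant-coefficient Riccati equation is entirely routine.
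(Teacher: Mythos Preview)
The paper does not prove Lemma~\ref{4.3} at all; it simply quotes the result from \cite[Proposition~4 and (2.5)]{Ze93}. Your proposal is therefore a self-contained proof where the paper offers none, and the argument is correct: the split-at-the-minimum recurrence for $P_{n+1}^{\lin}$ is exactly the decomposition used in \cite{Ze93}, and the Riccati-to-linear reduction via $G^{\lin}=-u_2^{-1}y'/y$ is the standard way to integrate.

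One verbal slip to fix: where you write ``every other entry and the two boundary $0$'s all lie below~$1$'', you mean that every other entry lies \emph{above}~$1$ while only the boundary $0$'s lie below. That is precisely why the pivot~$1$ is a double ascent at $p=1$, a double descent at $p=n+1$, and a valley for interior~$p$, as you go on to state correctly; and it is also why replacing the neighbour~$1$ by a boundary~$0$ in each factor is type-preserving. With that sentence corrected, the proof stands.
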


Let $J_k(z)$ be the exponential generating function of the $k$-th column of 
the Catalan matrix $J=[\mu_{n,k}( \bm{\beta},\bm{\gamma})]_{n,k\geq0}$ (see \eqref{rec+catalan+matrix}),
that is
\begin{equation}
   J_k(z)=\sum_{n=k}^{\infty} \mu_{n,k}(\bm{\beta},\bm{\gamma})\frac{z^n}{n!}
 = \frac{z^k}{k!}+\mu_{k+1,k}( \bm{\beta},\bm{\gamma})\frac{z^{k+1}}{(k+1)!}+\cdots,
\end{equation}
in particular,
\begin{equation*}
J_0(z)=\sum_{n=0}^{\infty} \mu_{n}(\bm{\beta},\bm{\gamma}))\frac{z^n}{n!}
\end{equation*}
is the exponential generating function of the moments $\mu_n's$.
%
%

\begin{lemma}\cite[p. 310]{Ai07}
There exists a function $F(z)=\sum_{n=0}^{\infty} F_n \frac{z^n}{n!}$ with~$F(0)=0$ such that
\be\label{eq:sheffer}
J_k(z)=J_0(z)\frac{(F(z))^k}{k!} \;\; \textrm{for all}\;\; k
\ee
if and only if $\gamma_k=a+ks$, $\beta_k=k(b+(k-1)u)$ and
\be\label{Aigner:system}
\begin{cases}
F'(z)=1+sF(z)+u(F(z))^2,\\
J_0'(z)=aJ_0(z)+bJ_0(z)F(z).
\end{cases}
\ee
\end{lemma}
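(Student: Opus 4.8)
The plan is to pass to the exponential generating functions of the columns of the Catalan matrix and to rewrite the defining recurrence \eqref{rec+catalan+matrix} as a single differential recurrence, after which both implications reduce to a short computation. Set $J_k(z)=\sum_{n\ge 0}\mu_{n,k}(\bm{\beta},\bm{\gamma})\,z^n/n!$, with the convention $\mu_{n,k}=0$ for $n<k$, so that $J_k(z)=z^k/k!+O(z^{k+1})$ and $J_0(0)=1$. Replacing $n$ by $n+1$ in \eqref{rec+catalan+matrix} (it is immediate that the shifted identity also holds for $n<k$) and summing against $z^n/n!$ gives, for every $k\ge 0$,
\[
J_k'(z)=J_{k-1}(z)+\gamma_k J_k(z)+\beta_{k+1}J_{k+1}(z),\qquad J_{-1}(z):=0 .
\]
Since $\beta_{k+1}\ne 0$, this recurrence may be solved for $J_{k+1}$ in terms of $J_k'$, $J_{k-1}$ and $J_k$; hence the whole sequence $(J_k)_{k\ge 0}$ is completely determined by $J_0$ together with the scalars $\gamma_k,\beta_k$.

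For the ``if'' direction I would assume $\gamma_k=a+ks$, $\beta_k=k(b+(k-1)u)$ and the system \eqref{Aigner:system}, and put $\widetilde J_k(z):=J_0(z)F(z)^k/k!$. The first equation of \eqref{Aigner:system} together with $F(0)=0$ forces $F'(0)=1$, so $F(z)=z+O(z^2)$ and $\widetilde J_0=J_0$. For $k\ge 1$, differentiating $\widetilde J_k$ and substituting $J_0'=aJ_0+bJ_0F$ and $F'=1+sF+uF^2$ writes $\widetilde J_k'$ as a combination of the five series $J_0F^{k-1}/(k-1)!$, $J_0F^{k}/k!$, $J_0F^{k}/(k-1)!$, $J_0F^{k+1}/k!$, $J_0F^{k+1}/(k-1)!$; expanding $\gamma_k=a+ks$ and $\beta_{k+1}=(k+1)(b+ku)$ in $\gamma_k\widetilde J_k+\beta_{k+1}\widetilde J_{k+1}$ the same way, one checks term by term that $\widetilde J_k'=\widetilde J_{k-1}+\gamma_k\widetilde J_k+\beta_{k+1}\widetilde J_{k+1}$ (the case $k=0$ being immediate from $\gamma_0=a$, $\beta_1=b$). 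Thus $(\widetilde J_k)$ obeys the same differential recurrence as $(J_k)$ and agrees with it at $k=0$, so the determinacy noted above gives $\widetilde J_k=J_k$ for all $k$, which is \eqref{eq:sheffer}.

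For the ``only if'' direction I would assume \eqref{eq:sheffer}; comparing lowest-order terms in $J_1=J_0F$ gives again $F(z)=z+O(z^2)$, so $1,F,F^2$ are linearly independent over $\mathbb{Q}(\bm{\beta},\bm{\gamma})$. Substituting $J_k=J_0F^k/k!$ into the differential recurrence and cancelling the common factor $J_0F^{k-1}/(k-1)!$ (legitimate, since $J_0$ is a unit and $F^{k-1}$ divides both sides) yields, for every $k\ge 1$,
\[
\frac{J_0'}{J_0}\,F+kF'=k+\gamma_kF+\frac{\beta_{k+1}}{k+1}\,F^2 .
\]
Subtracting the $k=1$ instance from the $k$-th one and dividing by $k-1$ (for $k\ge 2$) exhibits $F'$ as $1+\frac{\gamma_k-\gamma_1}{k-1}F+\frac{1}{k-1}\big(\frac{\beta_{k+1}}{k+1}-\frac{\beta_2}{2}\big)F^2$; since the fixed series $F'$ has a unique expansion in the basis $\{1,F,F^2\}$, the two coefficients do not depend on $k$, say $s$ and $u$, so $\gamma_k$ is affine in $k$, $\beta_{k+1}/(k+1)$ is affine in $k$, and $F$ satisfies the first equation of \eqref{Aigner:system}. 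Feeding this back into the $k=1$ equation gives $J_0'/J_0=a+bF$ for suitable constants $a,b$, i.e. the second equation; finally the $k=0$ case $J_0'=\gamma_0J_0+\beta_1J_0F$ of the recurrence, together with the linear independence of $J_0$ and $J_0F$, forces $\gamma_0=a$ and $\beta_1=b$, extending the affine formulas to all indices and giving $\gamma_k=a+ks$ and $\beta_k=k(b+(k-1)u)$.

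The computation is entirely routine; the only points demanding care are the cancellation of the factor $J_0F^{k-1}/(k-1)!$ (best carried out in $\mathbb{Q}(\bm{\beta},\bm{\gamma})[[z]]$, using that $J_0$ is a unit and $F$ has valuation $1$) and the linear-independence argument that pins down the $k$-independence of the coefficients and disposes of the boundary indices $k=0,1$ by hand. I do not expect a genuine obstacle here. An alternative would exploit the Motzkin-path model of $\mu_{n,k}$ recalled just after \eqref{rec+catalan+matrix} together with a first-return decomposition of the paths, but the generating-function argument above is shorter and self-contained.
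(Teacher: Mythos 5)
The paper does not prove this lemma at all---it is quoted verbatim from Aigner's book---so your write-up is judged on its own; it is correct and is essentially the standard textbook argument. Translating the column recurrence \eqref{rec+catalan+matrix} into the differential recursion $J_k'=J_{k-1}+\gamma_k J_k+\beta_{k+1}J_{k+1}$, using $\beta_{k+1}\neq 0$ to see that $J_0$ determines all columns, and comparing coefficients in the basis $\{1,F,F^2\}$ (legitimate since $F=z+O(z^2)$) is precisely how the characterization is obtained in \cite{Ai07}, and your handling of the boundary indices $k=0,1$ and of the cancellation of $J_0F^{k-1}$ is sound.
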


\begin{proof}[\bf Proof of Theorem~\ref{thm+gen+Jac+Rog+Laguerre}]
We define the multivariate generating function of \emph{Laguerre digraphs}
\be\label{poly:laguerre digraph}
   \widehat{L}_{n,k}(\u, \alpha, \beta)
 := \sum_{G\in \LD_{n,k}}u_1^{\pk(G)}u_2^{\val(G)}u_3^{\da(G)}u_4^{\dd(G)}\alpha^{\fp(G)}\beta^{\cyc(G)}.
\ee
A Laguerre digraph $G\in \LD_{n,k}$ consists of a permutation (that is a collection of disjoint cycles)
on some subset $S\subseteq [n]$ together with $k$ disjoint paths on $[n]\backslash S$.
Each of these paths can be considered as a permutation written in word form.
Hence, the exponential generating function of $\widehat{L}_{n,k}(\u, \alpha, \beta)$ is the product of $G^{\textrm{cyc}}(z)$ and 
${(G^{\lin }(z))^k}/{k!}$, i.e.,
\begin{equation}\label{egf+LD}
   \sum_{n=k}^{\infty}\widehat{L}_{n,k}(\u, \alpha, \beta)\frac{z^n}{n!}
 = G^{\textrm{cyc}}(z)\frac{(G^{\lin }(z))^k}{k!}.
\end{equation}
On the other hand, define the Jacobi-Rogers matrix $J=[\mu_{n,k}(\bm{\beta},\bm{\gamma}))]_{n,k\geq0}$ with
$$
\gamma_k=k(u_3+u_4)+\alpha\beta,\quad \beta_{k+1}=(k+\beta)(k+1)u_1u_2.
$$
It follows from  Lemma~\ref{lem-fix-carlitz-scoville} and Lemma~\ref{lem+J-CF}
that
$J_0(z):=G^{\textrm{cyc}}(z)$. Then, 
by  Lemma~\ref{lem-fix-carlitz-scoville} and Lemma~\ref{4.3} we  see that $F(z)=G^{\lin }(z)/u_1$ is the solution of system~\eqref{Aigner:system} with
\begin{equation*}
a=\alpha\beta,\; b=\beta u_1u_2,\; s=u_3+u_4,\; u=u_1u_2.
\end{equation*}
Comparing with \eqref{eq:sheffer} and  \eqref{egf+LD} we obtain \eqref{jacobi-laguerre}.
\end{proof}

\begin{remark} The generating function \eqref{egf+LD} is equivalent to  \cite[(6.11)]{DS23}.
 Zhu~\cite{Zhu20} defined
the triangular array 
 $[T_{n,k}]_{n,k\geq 0}$ by the recurrence relation
\begin{align*}
T_{n,k}=&\lambda (a_1k+a_2)T_{n-1,k-1}\\
&+[(b_1-da_1)n-(b_1-2da_1)k+b-2-d(a_1-a_2)]T_{n-1,k}\nonumber\\
                     &+\frac{d(b_1-da_1)}{\lambda}(n-k+1)T_{n-1,k+1},
\end{align*}
where $T_{0,0}=1$ and $T_{n,k}=0$ unless $0\leq k\leq n$.  It is shown~\cite{Zhu20} that  this array and the corresponding polynomials $T_n(x)=\sum_{k=0}^n T_{n,k}x^k$ have properties similar to 
 the classical Eulerian numbers and polynomials, 
and  the generating function
$\sum_{n=0}^{\infty} T_n(x) t^n$ has the J-fraction expansion \eqref{J-CF}, see  \cite[Theorem 2.7]{Zhu20},  with
\begin{subequations}\label{zhu:cf}
\begin{align}
  \gamma_k &=(ka_1+a_2)(\lambda+dx)+(kb_1+b_2)x,\\
\beta_{k+1}&=(k+1)\bigl((ka_1+a_2)b_1+a_1b_2\bigr)x(\lambda+dx).
\end{align}
\end{subequations}
However, a combinatorial interpretation of $T_n(x)$  in its full generality is missing.
In fact,  such a combinatorial interpretation can be derived from Lemma~\ref{lem+J-CF}  as in the following.
Applying the following substitution in \eqref{fix-carlitz-scoville-cf}, 
\begin{subequations}
\begin{align*}
&u_1=u_3=a_1(\lambda+dx),\quad
u_2=u_4=b_1x,\\
&\alpha={\bigl(a_1b_1\bigl(a_2(\lambda+dx)+b_2x\bigr)\bigr)}/{(a_1b_2+a_2b_1)},\quad \beta={a_2}/{a_1}+{b_2}/{b_1},
\end{align*}
\end{subequations}
and comparing with \eqref{zhu:cf},  we obtain the following 
 combinatorial interpretation of 
 polynomials $T_n(x)$:
\begin{align}
T_n(x)&=\sum_{\pi\in \S_n}a_1^{\crise(\pi)-\cyc_2(\pi)}b_1^{\cfall(\pi)-\cyc_2(\pi)}x^{\cfall(\pi) }\nonumber\\
      &\qquad \times(\lambda+dx)^{\crise(\pi)}\bigl(a_2(\lambda+dx)+b_2x\bigr)^{\fix(\pi)}(a_2b_1+a_1b_2)^{\cyc_2(\pi)},
\end{align}
where $\cyc_2(\pi)=\cyc(\pi)-\fix(\pi)$,
and 
$\crise(\pi), \cfall(\pi)$ and $\fix(\pi)$ denote, respectively,
the number of cycle rises ($i < \pi(i)$), cycle falls ($i > \pi(i)$) and fixed points ($i=\pi(i)$) in permutation $\pi$.
2\end{remark}
%
\section*{Acknowledgements}

The first author was supported by the \emph{China Scholarship Council} (No. 202206060088).
This work was done during his visit at Universit\'e Claude Bernard Lyon 1 in 2022-2023.


\end{document}